\documentclass{amsart}
\usepackage{amsmath,amssymb,amscd,latexsym}

\usepackage[all]{xy}

\usepackage[usenames, dvipsnames]{color}

\usepackage{hyperref}

\usepackage{amsmath, amsthm, tikz, tikz-cd,mathrsfs}
\usepackage{amssymb}
\usepackage{enumerate}
\usepackage{dsfont}
\usepackage{todonotes}
\usepackage{mathtools}
\usepackage{verbatim}
\usepackage{graphicx}
\usepackage{hyperref}
\usepackage{cleveref}
\usepackage{xy}

\newcommand{\xto}{\xrightarrow} 

\newcommand{\sigmag}{\sigma_{\Gamma}}

\newcommand{\C}{{\mathbb{C}}}

\newcommand{\R}{{\mathbb{R}}}
\newcommand{\Z}{{\mathbb{Z}}}

\newcommand{\TZS}{i\Z}
\newcommand{\TA}{\overline{\Ah}}

\newcommand{\ZDR}{\Z_{\Dh,\Rh}}

\newcommand{\HDR}{H_{\Dh,\Rh}}

\newcommand{\cl}{\mathrm{cl}}

\newcommand{\id}{\mathrm{id}}

\newcommand{\coker}{\mathrm{coker}\,}

\newcommand{\colim}{\operatorname*{colim}}

\newcommand{\Imm}{\mathrm{Im}\,}

\newcommand{\Manz}{\mathbf{Man}_{C_2}}

\newcommand{\Ah}{{\mathcal A}}

\newcommand{\Dh}{{\mathcal D}}
\newcommand{\Eh}{{\mathcal E}}

\newcommand{\Rh}{{\mathcal R}}

\newtheorem{theorem}{Theorem}[section]
\newtheorem{lemma}[theorem]{Lemma}
\newtheorem{prop}[theorem]{Proposition}

\theoremstyle{definition}
\newtheorem{defn}[theorem]{Definition}

\newtheorem{example}[theorem]{Example}

\newtheorem{remark}[theorem]{Remark}






\DeclareMathOperator{\dd}{\mathrm{d}}
\DeclareMathOperator{\del}{\partial}

\renewcommand*\d{\mathop{}\!\mathrm{d}}
\DeclareMathOperator{\A}{\mathcal{A}}
\DeclareMathOperator{\cF}{\mathcal{F}}
\DeclareMathOperator{\U}{\mathcal{U}}
\DeclareMathOperator{\V}{\mathcal{V}}

\newcommand{\CH}{\v{C}ech}
\newcommand{\cH}{\check{\mathbb{H}}}

\begin{document}

\title[A note on Real line bundles and Real smooth Deligne cohomology]{A note on Real line bundles with connection and Real smooth Deligne cohomology}

\author{Peter Marius Flydal}
\address{Department of Mathematical Sciences, NTNU, Trondheim, Norway}
\email{petermf@live.no}

\author{Gereon Quick} 
\address{Department of Mathematical Sciences, NTNU, Trondheim, Norway}
\email{gereon.quick@ntnu.no}
\thanks{The second-named author gratefully acknowledges the partial support by the RCN Project No.\,313472 {\it Equations in Motivic Homotopy}. }

\author{Eirik Eik Svanes}
\address{Department of Mathematics and Physics, Universitet i Stavanger, Norway}
\email{eirik.e.svanes@uis.no}

\date{}

\begin{abstract}
We define a Real version of smooth Deligne  cohomology for manifolds with involution which interpolates between equivariant sheaf cohomology and smooth imaginary-valued forms. 
Our main result is a classification of Real line bundles with Real connection on manifolds with involution.   
\end{abstract}
\subjclass{\rm 14F43, 55R91, 55R15.}

\maketitle

\section{Introduction}

In \cite{atiyah} Atiyah introduced Real vector bundles, which are complex vector bundles on a space $X$ with involution equipped with an antilinear conjugation map.   
The $K$-theory of Real bundles generalizes both orthogonal $KO$-theory, ordinary complex $K$-theory and self-conjugate $KSC$-theory. 
Real vector bundles play an important role in mathematical physics. %
A recent example is given by the result of de Nittis and Gomi in \cite{NG2} that topological insulators are classified by isomorphism classes of Real line bundles over spheres and tori equipped with certain natural involutions.  
Indeed, the study of line bundles with connection, and their higher form generalisations in terms of gerbes, has long played an important role in mathematical physics and string theory. 
For example, in quantum field theory the choice of a connection on certain line bundles plays a role in defining partition functions. 
Specifically, gauge invariance often requires the partition function to be a section of a line bundle over the space of structures under consideration. 
This space is usually interpreted as a Jacobian of some form (see e.g.\,\cite{witten} for more details). 
Differential cohomology has also lately become a vital tool in understanding Dirac quantization and anomalies, particularly with regards to higher form symmetries and gerbes (see e.g.\,\cite{hs, DF, apruzzi} with references therein).  
A mathematical exploration of such structures and all their variants is hence warranted. \\

As for ordinary bundles, the differential geometry of a Real vector bundle $E$ can be studied using Real connections on $E$. 
This motivates the construction of a Real differential refinement of integral equivariant cohomology. 
This construction and its application to the study of Real line bundles is the purpose of this paper. 
In the future we hope that this will help to provide a classification of Real vector bundles together with a Real connection on manifolds with an involution, similar to the classification of bundles with a connection on smooth or complex manifolds via Deligne cohomology as in \cite{gajer}. 
An important example of a Real connection is provided by the Berry connection which may be viewed as a link between quantum mechanics and topology as formulated in  \cite{berry} and \cite{simon} (see also  \cite{dedushenko}). 
The Grassmann--Berry connection on the Bloch bundle has also been studied in  \cite[Section II D]{NG2}.  \\

In the present paper we begin this analysis with a study of the low degrees of a Real differential cohomology theory and the case of Real line bundles with a Real connection. 
Now we briefly summarise our main result. 
Let $C_2$ denote the cyclic group of order $2$. 
Let $(M,\tau)$ be a manifold with involution and let $H^*(M,C_2;\cF)$ denote $C_2$-equivariant sheaf cohomology with coefficients in the $C_2$-sheaf $\cF$.  
Let $\Eh^{k}(M)$ denote invariant smooth imaginary valued forms on $M$ and let $\Eh_0^{k}(M)$ denote the subgroup of closed integral imaginary-valued forms. 

\begin{theorem}\label{thm:intro}
Let $(M,\tau)$ be a manifold with involution. 
For every $q,p\geq 0$, there are cohomology groups $\HDR^q(M;\Z(p))$, which we call Real smooth Deligne cohomology, 
such that $\HDR^p(M;\Z(p))$ fits into a short exact sequence 
\begin{align*}
    0 \to\Eh^{p-1}(M)/\Eh^{p-1}_{0}(M) \to \HDR^{p}(M;\Z(p)) \to H^{p}(M,C_2;\TZS) \to 0.
\end{align*}
For $p=2$, this sequence is isomorphic to 
\begin{align*}
    0\to \left\{\begin{tabular}{l}
isom.\,classes of \\ 
Real connections \\ 
on the bundle \\ $M\times U(1)$
 \end{tabular}\right\} \to \left\{\begin{tabular}{l}
isom.\,classes of \\
Real line bundles\\ 
with Real \\
connection over $M$
\end{tabular}\right\}\to \left\{\begin{tabular}{l}
isom.\,classes \\ 
of Real\\ 
line bundles \\ 
over $M$
\end{tabular}\right\} \to 0
\end{align*}
where we consider $U(1)$ with the $C_2$-action given by complex conjugation and equip $M\times U(1)$ with the $C_2$-action induced by each factor. 
For $q<p$, there is a natural short exact sequence of the form 
\begin{align*}
    0\to H^{q-1}(M, C_2;i\R)/H^{q - 1}(M,C_2;\TZS)_{\mathrm{free}} \to \HDR^{q}(M;\Z(p)) \to H^{q}(M,C_2;\TZS)_{\mathrm{tors}}\to 0. 
\end{align*}
The group $\HDR^{2}(M;\Z(3))$ is in bijection with the set of isomorphism classes of Real line bundles with flat Real connection. 
\end{theorem}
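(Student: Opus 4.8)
The plan is to realise $\HDR^{2}(M;\Z(3))$ as the kernel of the Real curvature map on $\HDR^{2}(M;\Z(2))$, and then to invoke the classification of the latter group furnished by the $p=2$ case of the theorem. Concretely, the Real Deligne complex of $C_2$-sheaves computing $\HDR^{\ast}(M;\Z(3))$ has the shape $\TZS\to\Eh^{0}\to\Eh^{1}\to\Eh^{2}$ in cohomological degrees $0,\dots,3$; forgetting the top term gives a surjection onto the complex $\TZS\to\Eh^{0}\to\Eh^{1}$ computing $\HDR^{\ast}(M;\Z(2))$, whose kernel $\Kh$ is the single $C_2$-sheaf $\Eh^{2}$ placed in cohomological degree $3$. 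The long exact sequence of this short exact sequence of complexes in equivariant hypercohomology reads, around degree $2$,
\begin{align*}
0 = \mathbb{H}^{2}(M,C_2;\Kh) &\longrightarrow \HDR^{2}(M;\Z(3)) \longrightarrow \HDR^{2}(M;\Z(2)) \\
&\xrightarrow{\ \partial\ } \mathbb{H}^{3}(M,C_2;\Kh) = \Eh^{2}(M),
\end{align*}
since a $C_2$-sheaf placed in degree $3$ has vanishing equivariant hypercohomology in degree $2$ and $\mathbb{H}^{3}(M,C_2;\Kh)=H^{0}(M,C_2;\Eh^{2})$ is the group $\Eh^{2}(M)$ of invariant imaginary-valued $2$-forms. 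Thus $\HDR^{2}(M;\Z(3))\cong\ker\bigl(\partial\colon\HDR^{2}(M;\Z(2))\to\Eh^{2}(M)\bigr)$.

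What remains is to recognise $\partial$ as the Real curvature. An explicit cocycle computation in the complexes computing $\HDR^{\ast}$ shows that if a degree-$2$ cocycle $(g_{ijk},a_{ij},\omega_{i})$, with $\omega_{i}$ the local imaginary-valued connection forms, represents a class of $\HDR^{2}(M;\Z(2))$, then $\partial$ of that class is represented by the global invariant $2$-form whose local pieces are the $d\omega_{i}$; under the identification $\HDR^{2}(M;\Z(2))\cong\{\text{isom.\ classes of Real line bundles with Real connection}\}$ from the $p=2$ case this is precisely the assignment to a Real connection of its curvature $2$-form. Hence $\ker\partial$ is exactly the set of isomorphism classes of pairs consisting of a Real line bundle and a Real connection with vanishing curvature, i.e.\ the set of isomorphism classes of Real line bundles with flat Real connection, which gives the claimed bijection. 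As a check, substituting $\ker\partial$ into the exact sequence of the theorem for $q=2<p=3$ presents this set as an extension of $H^{2}(M,C_2;\TZS)_{\mathrm{tors}}$ by $H^{1}(M,C_2;i\R)/H^{1}(M,C_2;\TZS)_{\mathrm{free}}$; the same extension is produced by the equivariant Bockstein sequence of $0\to\TZS\to i\R\to i\R/i\Z\to 0$, so that $\HDR^{2}(M;\Z(3))\cong H^{1}(M,C_2;i\R/i\Z)$, which classifies $C_2$-equivariant torsors over the sheaf of locally constant $U(1)$-valued functions with its Real structure and is the model-independent description of flat Real line bundles.

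The step demanding the most care is the identification of the connecting homomorphism $\partial$ with the Real curvature: the cocycle computation must be carried out with the $C_2$-equivariant structure (the conjugation action on $\TZS$ and on each $\Eh^{k}$) kept in place throughout, and it must be matched with whichever explicit cocycle model underlies the classification of $\HDR^{2}(M;\Z(2))$ in the $p=2$ case. The rest is formal, resting only on the long exact sequence in equivariant hypercohomology and the vanishing of $\mathbb{H}^{q}$ of a $C_2$-sheaf concentrated in a single degree, for $q$ below that degree.
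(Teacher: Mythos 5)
Your proposal addresses only the final sentence of the theorem, namely the identification of $\HDR^{2}(M;\Z(3))$ with isomorphism classes of Real line bundles with flat Real connection. All the other claims --- the construction of the groups $\HDR^{q}(M;\Z(p))$ as equivariant \v{C}ech hypercohomology of a Real Deligne complex, the short exact sequence in degree $q=p$ (which requires computing $\cH^{j}(M,C_2;\TA^{*\leq p-1})$ via acyclicity of $(\TA^k)^{C_2}$ and an equivariant de Rham theorem, and identifying $\Eh^{p-1}_0(M)$ as the image of the connecting map), the $q<p$ sequence, and above all the $p=2$ identification with the sequence of Real line bundles with Real connection (Theorem \ref{HD2 classifies real line bundles with connection} and Theorem \ref{thm:iso_of_ses_p=2} in the body of the paper) --- are not proved but are explicitly invoked as inputs. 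Since the theorem is a summary of results established across the whole paper, this is a genuine gap: what you have is a proof of one corollary of the other parts, not a proof of the statement.

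For the part you do treat, the argument is correct and takes a genuinely different route from the paper's. The paper (Proposition \ref{H2D3 classifies real line bundles with flat connection}) first replaces $\ZDR(3)$ by the quasi-isomorphic complex $\U(1)\to\TA^1\to\TA^2$ via Lemma \ref{lemma:all_cases_of_p} and then observes directly that a cocycle for this complex is a pair $(g,\omega)$ as in the $p=2$ case subject to the extra condition $\dd\omega_i=0$, i.e.\ flatness. You instead use the termwise split short exact sequence of complexes $0\to\TA^{2}[-3]\to\ZDR(3)\to\ZDR(2)\to 0$, the vanishing $\mathbb{H}^{2}(M,C_2;\TA^{2}[-3])=H^{-1}(M,C_2;\TA^{2})=0$, and the identification of the connecting map $\HDR^{2}(M;\Z(2))\to H^{0}(M,C_2;\TA^{2})=\Eh^{2}(M)$ with the curvature; this is sound (the connecting map is represented by $(\dd\omega_i)$, which glue to a global invariant $2$-form) and has the merit of exhibiting the curvature map structurally, at the cost of one further cocycle computation to match $\partial$ with the geometric curvature. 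Two small cautions: the kernel sheaf is $\TA^{2}$, not $\Eh^{2}$ (in the paper $\Eh^{2}(M)$ denotes its invariant global sections); and your closing ``check'' via the Bockstein sequence of $0\to\TZS\to i\R\to i\R/i\Z\to 0$ only shows that the two extensions have the same outer terms, not that they are isomorphic as extensions, so it should not be leaned on. Note also that the paper proves the slightly stronger statement that $\HDR^{2}(M;\Z(p))$ classifies flat Real line bundles for every $p>2$.
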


We note that similar cohomology theories have been developed previously. 
In \cite{gomi} Gomi constructs equivariant smooth Deligne cohomology in greater generality, but does not consider the applications to Real bundles with Real connection. 
In \cite{NG2} de Nittis and Gomi classify Real vector bundles on manifolds with involution in low dimensions using $H^2(M,C_2;i\Z)$. 
We hope that the groups $\HDR^p(M;\Z(p))$ will help to explore the mixed case when $H^2(M,C_2;i\Z)$ has both a free and a torsion part (see \cite[Remark 3.18]{NG2}). 
In \cite{dSLF} dos Santos and Lima-Filho develop an equivariant Deligne cohomology theory using Bredon cohomology. 
In even degrees, however, the choice of action on the coefficients does not seem suitable for the classification of Real bundles. 
In \cite{gradisati} Grady and Sati construct twisted Deligne cohomology using classifying stacks which fits into a differential cohomology diamond and may be used to obtain similar classification results.  
In \cite{BM} variations of the Deligne complex we study occur without taking the $C_2$-action into account. 
In \cite{fok} and \cite{hekmati}, \cite{hekmati2} it is shown that higher cohomological degrees of equivariant integral cohomology classify Real bundle gerbes. 
The connection between gerbes and Deligne cohomology (see \cite[Chapter V]{bryl}) raises the question whether the Real Deligne cohomology proposed in the present paper may be applied to the study of Real gerbes with Real connective structure and curving. 
We have not explored this question further. 
Overall, we believe that there are significant differences to the existing literature and that the present paper adds a new perspective. 


\section{Manifolds with involution and Real bundles}

Let $C_2$ denote the cyclic group of order $2$. 
We recall the definition and basic properties of $C_2$-spaces and Real bundles.

\begin{defn}
A $C_2$-space, or an involutive space, is a topological space $X$ with a self-inverse homeomorphism $\tau \colon X\to X$. 
We call a 
$C_2$-space $(M,\tau)$ a $C_2$-manifold if $M$ is a paracompact smooth finite-dimensional manifold without boundary and $\tau$ is smooth. 
We will often just write $M$ for $(M,\tau)$. 
A morphism between $C_2$-manifolds $(M,\tau)$ and $(N,\sigma)$ is a smooth map $f \colon M\to N$ that commutes with the involutions, i.e., $f\circ\tau = \sigma\circ f$. 
We denote the category of $C_2$-manifolds by $\Manz$. 
\end{defn}

Examples of compact $C_2$-manifolds are given by  compact subspaces of $\C^N$ closed under complex conjugation using the conjugation as involution, as for example $U(n)$ for $N=n^2$; 
in particular, the unit circle $U(1)$ with complex conjugation. 
Other important classes of examples are given by the $n$-dimensional spheres $\mathbb{S}^n$  with the antipodal involution, and tori consisting of products of copies of $U(1)$ and $\mathbb{S}^n$. 
We now recall the definition of Real bundles over $C_2$-spaces from \cite[page 368]{atiyah}: 

\begin{defn}\label{defOfRealVecBundles}

Let $(M,\tau)$ be a $C_2$-manifold. 
A \emph{Real vector bundle} over $(M,\tau)$ is a complex vector bundle $\pi \colon E \to M$ over $M$ such that $E$ is a $C_2$-manifold $(E,\sigma)$ such that the projection $\pi \colon E\to M$ commutes with the involutions, $\pi\circ\sigma = \tau\circ\pi$, 
and the restriction of $\sigma$ to $E_x \to E_{\tau(x)}$ is $\C$-anti-linear, i.e., $\sigma(\lambda e) = \overline{\lambda}\sigma(e)$ in $E_{\tau(x)}$ for every $e \in E_x$ and $\lambda\in\C$. 
\end{defn}

The following result is a consequence of \cite[Proposition 4.10]{NG1} (see 
also \cite[\S 2]{NG2}). 

\begin{prop}\label{prop:trivialization}
Let $(M,\tau) \in \Manz$ be compact and 
let $(E,\sigma)$ be a Real bundle over $(M,\tau)$, with projection map $\pi \colon E\to M$. 
Then $(E,\sigma)$ is equivariantly locally trivial, i.e., for every $p \in M$, there exists a $\tau$-invariant neighborhood $U$ of $p$ and an equivariant homeomorphism $h \colon \pi^{-1}(U) \to U \times\C^n$ where the product bundle $U\times\C^n \to U$ is endowed with the Real structure given by complex conjugation on $\C^n$.  
Moreover, if $p = \tau(p)$, the neighborhood $U$ can be chosen to be connected. 
If $p \neq \tau(p)$, $U$ can be chosen as the union of two disjoint open sets $U := U' \cup \tau(U')$ with $p \in U'$.
\end{prop}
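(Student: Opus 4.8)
The plan is to prove the two clauses of the statement separately, according to whether $p$ is a fixed point of $\tau$ or not. The case $p\neq\tau(p)$ is elementary: one transports an ordinary local trivialization across $\tau$. The fixed-point case is the one with content; I would either quote \cite[Proposition 4.10]{NG1}, or carry out the short ``averaging'' argument sketched below.

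\textbf{The case $p\neq\tau(p)$.} Since $E$ is in particular a complex vector bundle, choose a neighbourhood $U'$ of $p$ together with an ordinary trivialization $\phi\colon\pi^{-1}(U')\xrightarrow{\sim}U'\times\C^n$; as $M$ is Hausdorff we may shrink $U'$ so that $U'\cap\tau(U')=\emptyset$. Set $U:=U'\cup\tau(U')$, a disjoint $\tau$-invariant open neighbourhood of the $\tau$-orbit of $p$, and write $c(x,z):=(\tau(x),\overline z)$ for the involution on $U\times\C^n$. Define $h$ to equal $\phi$ on $\pi^{-1}(U')$ and to equal $e\mapsto c\bigl(\phi(\sigma(e))\bigr)$ on $\pi^{-1}(\tau(U'))$. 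Because $\sigma$ and $c$ are fibrewise antilinear while $\phi$ is fibrewise linear, $h$ is fibrewise $\C$-linear and covers $\id_U$, hence is a trivialization; and a direct check using $\sigma^2=\id$ and $c^2=\id$ on each of the two pieces shows $h\circ\sigma=c\circ h$. This is the required equivariant trivialization, in the stated form.

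\textbf{The case $p=\tau(p)$.} Start from any complex trivialization $\phi_0\colon\pi^{-1}(V_0)\xrightarrow{\sim}V_0\times\C^n$ and replace $V_0$ by the $\tau$-invariant neighbourhood $V:=V_0\cap\tau(V_0)$ of $p$. On the fibre $E_p$ the involution $\sigma_p$ is antilinear, so its fixed set $E_p^{\sigma_p}$ is an $\R$-subspace with $E_p=E_p^{\sigma_p}\oplus i\,E_p^{\sigma_p}$ (decompose $v$ into its $\sigma_p$-invariant part $\tfrac12(v+\sigma_p v)$ and its $\sigma_p$-anti-invariant part, the latter being $i$ times an invariant vector), in particular $\dim_\R E_p^{\sigma_p}=n$; sending an $\R$-basis of $E_p^{\sigma_p}$ to the standard basis of $\C^n$ and modifying $\phi_0$ by the corresponding constant element of $\GL_n(\C)$, we may assume that $\phi_0$ restricts on $E_p$ to an isomorphism carrying $\sigma_p$ to complex conjugation. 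Now $\Phi:=c\circ\phi_0\circ\sigma$ is again a trivialization of $\pi^{-1}(V)$ (a composite of two fibrewise antilinear maps and one fibrewise linear one, covering $\id_V$), so $\Phi\circ\phi_0^{-1}$ has the form $(v,z)\mapsto(v,g(v)z)$ for a smooth $g\colon V\to\GL_n(\C)$; the relation $\sigma^2=\id$ forces the Real cocycle identity $\overline{g(\tau(v))}\,g(v)=I$ on $V$, while the normalization gives $g(p)=I$. The key (one-line) computation is that $\beta(v):=\tfrac12\bigl(I+g(v)\bigr)$ satisfies $\beta(v)=\overline{\beta(\tau(v))}\,g(v)$, namely $\overline{\beta(\tau(v))}\,g(v)=\tfrac12\bigl(I+\overline{g(\tau(v))}\bigr)g(v)=\tfrac12\bigl(g(v)+I\bigr)=\beta(v)$ by the cocycle identity; and $\beta(v)=\overline{\beta(\tau(v))}\,g(v)$ is exactly the condition making the modified trivialization $B\circ\phi_0$, with $B(v,z):=(v,\beta(v)z)$, equivariant. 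Since $\beta(p)=I$ is invertible, the open set $V':=\{v\in V:\beta(v)\in\GL_n(\C)\}$ contains $p$; it is $\tau$-invariant because $g$ is invertible everywhere on $V$ (so $\beta(v)$ is invertible iff $\overline{\beta(\tau(v))}$ is). Replacing $V'$ by the connected component of $p$, which $\tau$ preserves since $\tau(p)=p$, we obtain a connected $\tau$-invariant $U$ over which $B\circ\phi_0$ is the desired equivariant trivialization.

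\textbf{Where the work is.} There is no essential obstacle. The only non-formal input is the normalization $g(p)=I$ --- equivalently, the elementary fact that the ``Real vector space'' $(E_p,\sigma_p)$ is isomorphic to $\C^n$ with complex conjugation --- after which the averaging formula $\beta=\tfrac12(I+g)$ simultaneously solves the twisted descent equation and is invertible near $p$ by continuity. One could instead simply cite \cite[Proposition 4.10]{NG1} together with the separation argument of the free case.
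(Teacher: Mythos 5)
Your argument is correct and complete. Note, however, that the paper does not actually prove this proposition: it is stated as a consequence of \cite[Proposition 4.10]{NG1} (see also \cite[\S 2]{NG2}), so there is no in-paper proof to compare against line by line. What you have done is supply the standard self-contained argument that the citation stands in for: the free-orbit case by transporting a trivialization across $\tau$, and the fixed-point case by (i) identifying the fibre $(E_p,\sigma_p)$ with $(\C^n,\,\overline{\phantom{z}}\,)$ via the real form $E_p^{\sigma_p}$, (ii) extracting the ``Real cocycle'' $g$ with $\overline{g(\tau(v))}\,g(v)=I$ and $g(p)=I$, and (iii) averaging via $\beta=\tfrac12(I+g)$, which solves the twisted descent equation exactly and is invertible near $p$. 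I checked the three computations that carry the weight --- the cocycle identity from $\sigma^2=\id$, the identity $\overline{\beta(\tau(v))}\,g(v)=\beta(v)$, and the fact that this identity is precisely the equivariance condition for $B\circ\phi_0$ --- and all are correct; the $\tau$-invariance of the invertibility locus of $\beta$ and of the connected component of $p$ are also handled properly. Two minor observations: your proof never uses compactness of $M$, so it establishes the statement slightly more generally than it is posed; and since the proposition asks only for an equivariant homeomorphism, the smoothness bookkeeping is not even needed. The only thing the citation buys over your argument is brevity; your version has the advantage of making the averaging mechanism explicit, which is the same device the paper later reuses for equivariant partitions of unity and Real connections.
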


\begin{remark}\label{rem:trivializing_cover}
Proposition \ref{prop:trivialization} implies that every $(M,\tau)$ has a trivializing cover $\{U_i\}_{i\in I}$ such that every $U_i$ is connected, by splitting sets of the form $U_i = U_i' \cup \tau(U_i')$ into their components if necessary. 
We then get an induced action of the involution $\tau$ on the index set $I$ in the following way: 
For every $i\in I$, we have $\tau(U_i) = U_j$ (where $j = i$ is possible), and we  define $\tau(i) := j$. 
\end{remark}

\begin{defn}\label{def:equivariantCovers}
Let $(M,\tau)$ be a $C_2$-manifold. 
A cover $\U = \{U_i\}_{i\in I}$ of $M$ is called a \emph{$C_2$-cover}, or an \emph{equivariant cover}, if every $U_i\in\U$ satisfies $\tau(U_i)\in\U$.  
A $C_2$-cover is said to be \textit{without fixed points} if the induced action on the indexing set is free.
\end{defn}

\begin{remark}\label{rem:cover_without_fixed_points}
We can turn a $C_2$-cover into a cover without fixed points by adding double occurrences of the sets $U_i$ fixed by the involution $\tau$. 
\end{remark}

\begin{defn}
Let $(M, \tau)$ be a $C_2$-manifold. 
An \emph{equivariant partition of unity} of $(M,\tau)$ is an ordinary partition of unity $\{\phi_i\}_{i\in I}$, subordinate to an equivariant cover $\U = \{U_i\}_{I}$, and such that every function $\phi_i \colon U_i\to\R$ satisfies
\[
\phi_i\circ\tau = \phi_{\tau(i)}. 
\]
\end{defn}

\begin{remark}\label{rem:equivariant_partition}
If $\{\theta_i\}_{i\in I}$ is an ordinary partition of unity subordinate to an equivariant cover, we can turn $\{\theta_i\}_{i\in I}$ into an equivariant partition by setting 
\[
\phi_i(p) := \frac{1}{2}(\theta_i(p) + \theta_i(\tau(p))).
\]
\end{remark}

\begin{remark}\label{rem:C2_Hermitian_metric}
Let $(M,\tau)\in \Manz$ be compact and $E\to M$ be a Real vector bundle. 
As explained in \cite[Remark 4.11]{NG1}, by taking an equivariant trivializing cover and by patching together using equivariant partitions of unity we can equip $E$ with a Hermitian metric which is compatible with $\tau$.  
By \cite[Corollary 2.7]{NG2} the set of isomorphism classes of Real vector bundles of dimension $n$ over $(M,\tau)$ is in a natural one-to-one correspondence with the set of isomorphism classes of Real principal $U(n)$-bundles. 
Hence results on Real vector bundles may be translated to similar statements on Real principal $U(n)$-bundles. 
We have chosen to work mostly with the former in this paper.
\end{remark}


\begin{defn}
Let $(E,\sigma)$ be a Real vector bundle over $(M,\tau)$, and let $\Gamma(E)$ denote its space of sections. 
There is an induced involution $\sigmag$ on $\Gamma(E)$ given by 
\[
\sigmag(s) := \sigma \circ s \circ\tau, ~ \text{for} ~ s\in\Gamma(E). 
\]
A fixed point of the action $\sigmag$ is called a \emph{Real section} of $E$. 
\end{defn}

Let $(M,\tau) \in \Manz$ and let $\Ah^k(M,\C)$ denote the space of complex-valued smooth $k$-forms over $M$. 
For a Real vector bundle $(E,\sigma)$ over $(M,\tau)$, we define the space of differential $k$-forms with value in $E$ as 
\[
\Ah^k(M,E) := \Gamma(E)\otimes_{\Ah^0(M,\C)}\Ah^k(M,\C)
\]
with involution, denoted by  
$\sigma_{k} \colon \Ah^k(M,E)\to\Ah^k(M,E)$, given  by 
\[
\sigma_k(s\otimes\omega) := (\sigma\circ s\circ\tau)\otimes\overline{(\tau^*(\omega))}.
\]
An ordinary connection on the underlying bundle $E$ is a differential operator
\[
\nabla \colon \Gamma(E) = \Ah^0(M,E) \to \Ah^1(M,E) 
\]
which satisfies the Leibniz rule. 
The involutions $\sigma_0$ and $\sigma_1$ on $0$- and $1$-forms, respectively, induce an involution $\tilde{\sigma}$ on the space of connections by setting 
\[
\tilde{\sigma}(\nabla) := \sigma_1\circ\nabla\circ\sigma_0.
\]
Following \cite{NG2} we can now define the notion of a Real connection as follows: 

\begin{defn}
Let $(M,\tau)$ be a $C_2$-manifold and $(E,\sigma)$ a Real vector bundle on $(M,\tau)$. 
A connection $\nabla$ on $E$ is called a \emph{Real connection} if $\nabla = \tilde{\sigma}(\nabla)$, i.e., if  
\[
\nabla\circ\sigma_0 = \sigma_1\circ\nabla.
\]
\end{defn}

\begin{remark}
Let $(E,\sigma)$ be a Real bundle and let $\nabla$ be an ordinary connection on $E$. 
Then $\nabla$ induces a Real connection $\nabla'$ on $(E,\sigma)$ defined by 
\[
\nabla' := \frac{1}{2}(\nabla + \tilde{\sigma}(\nabla)). 
\]
Since ordinary connections form an affine space (see e.g.,\cite[Remark B.3]{NG2}), $\nabla'$ is a connection on $E$. 
Since every bundle admits a connection, we see that every Real vector bundle admits a Real connection. 
\end{remark}


\section{Equivariant sheaves and \v{C}ech hypercohomology}

We first recall the definition of $C_2$-equivariant sheaves from \cite{tohoku} and then discuss the corresponding sheaf and \v{C}ech cohomology. 

\begin{defn}
Let $(M,\tau)$ be a $C_2$-manifold. 
A $C_2$-sheaf of abelian groups, or $C_2$-sheaf for short, $(\cF,\sigma)$ on $(M,\tau)$ is a sheaf of abelian groups $\cF$ on $M$ together with an isomorphism of sheaves $\sigma \colon \cF \to \tau^* \cF$ such that $\tau^*(\sigma) = \sigma^{-1}$. 
\end{defn}

Let $\cF$ be a $C_2$-sheaf, and $(M,\tau) \in \Manz$. 
There is an induced action $\sigmag$ on the global sections $\Gamma(M,\cF)$ given by 
\[
\sigmag(s)(x) = \sigma( s(\tau(x)))\quad\forall x\in M.
\] 
Let 
\[
\Gamma^{C_2}(M,\cF) := \Gamma(M,\cF)^{C_2} = \{s\in\Gamma(M,\cF)|s = \sigmag(s)\}
\] 
denote the space of sections that are fixed by this action.

\begin{example}
We will consider two main examples: 
the locally constant sheaves with values in $U(1)\subset\C$, 
and the locally constant sheaf $i\Z\subset\C$, 
with the $C_2$-action in both cases being given by complex conjugation inherited from $\C$. 
We will often denote the action on these modules by $z \mapsto \overline{z}$. 
\end{example}

By \cite[\S 5.1]{tohoku}, the category of $C_2$-sheaves  has enough injectives. 
Hence, as in \cite[\S 5.2]{tohoku}, we may define the equivariant sheaf cohomology of $(M,\tau)\in \Manz$ with coefficients in a $C_2$-sheaf $\cF$ as the right derived functor of the equivariant global sections functor $\Gamma^{C_2}$, i.e., 
\[
H^*(M,C_2;\cF) := \mathrm{R}^*\Gamma^{C_2}(M,\cF). 
\] 

\begin{remark}\label{BorelIsGrothendieck}
For $(M,\tau)\in \Manz$ and an abelian $C_2$-sheaf $\cF$, let $H^*_{C_2}(X;\cF)$  denote the Borel cohomology with twisted coefficients as defined in \cite[\S 6]{stieglitz} (see also \cite[Definition 3.19]{fok}). 
Following \cite[\S 6]{stieglitz} there is a natural isomorphism 
\[
H^*_{C_2}(X;\cF)\cong H^*(M,C_2;\cF).
\]
\end{remark}

\begin{example}\label{cohomOfTZ}
Let $\TZS$ be the locally constant $C_2$-sheaf with values in $i\Z$ and involution given by complex conjugation. 
Following Remark \ref{BorelIsGrothendieck}, the group $H^*(M,C_2;\TZS)$ is isomorphic to equivariant Borel cohomology with twisted $\Z$-coefficients. 
The latter cohomology is denoted by $H^*_{C_2}(M,\Z(1))$ in \cite{NG2}. 
By \cite{kahn}, $H^2_{C_2}(M,\Z(1))$ classifies Real line bundles over $(M,\tau)$ (see also \cite[5.1]{NG1} and \cite[\S 1]{krasnov}). 
\end{example}

\begin{example}\label{cohomOfTZ_bis}
Assume $M$ compact. 
By \cite{NG2}, there is a natural isomorphism of groups
\begin{align*}
H^1(M,C_2;\TZS) \cong [M,U(1)]_{C_2},
\end{align*}
where the right-hand group denotes $C_2$-homotopy classes of Real maps from $M$ to the unit circle $U(1)$. 
We recall that $C_2$-homotopy differs from ordinary homotopy in general.  
For example, the antipodal map $a \colon U(1) \to U(1)$ is homotopic to the identity as ordinary maps.
As $C_2$-maps, however, with involution given by conjugation, $a$ is not $C_2$-homotopic to the identity.  
\end{example}


As for ordinary sheaf cohomology,  equivariant sheaf cohomology can in many cases be computed via a \v{C}ech construction, which we call equivariant \v{C}ech cohomology and now recall from \cite[\S 5.5]{tohoku}. 
Let $(M,\tau)$ be a fixed $C_2$-manifold, and let $\U$ be a $C_2$-cover without fixed points. 
Let $(\cF,\sigma)$ be a $C_2$-sheaf on $(M,\tau)$. 
We consider the space of equivariant \v{C}ech cochains 
\begin{equation*}
\check{C}^p(\U,\cF) := \left\{\omega\in C^p(\U,\cF) ~ | ~ \sigma(\omega_{\tau(i_0),...,\tau(i_p)}\circ\tau) = \omega_{i_0,...,i_p} \right\}
\end{equation*}
where $C^p(\U,\cF)$ is the ordinary \v{C}ech complex. 
The coboundary maps $\del$ are inherited from the ordinary case, since the coboundary of an equivariant cochain is equivariant.
The equivariant \v{C}ech cohomology of $M$ with respect to the $C_2$-cover $\U$ and coefficients in $\cF$ is the cohomology of the complex $\check{C}^*(\U,\cF)$, i.e.,  
\[
\cH^k(\U,C_2;\cF) := \frac{\ker\left( \del \colon \check{C}^k(\U,\cF)\to\check{C}^{k + 1}(\U,\cF) \right)}{
\Imm\left( \del \colon \check{C}^{k - 1}(\U,\cF)\to\check{C}^k(\U,\cF) \right)}.
\]
We note that the category of $C_2$-covers of $(M,\tau)$ forms a directed set. 
By taking the direct limit of the equivariant \v{C}ech cohomologies over all $C_2$-covers, we may therefore define \v{C}ech cohomology groups as follows: 

\begin{defn}\label{defcechcohom}
Let $(M,\tau)\in \Manz$ and let $\cF$ be a $C_2$-sheaf on $(M,\tau)$.  
We define the equivariant \CH{} cohomology of $(M,\tau)$ with coefficients in $\cF$ as  
\[
\cH^*(M,C_2;\cF) := \colim_{\U} \cH^k(\U,C_2;\cF),
\] 
where the direct limit is taken over all $C_2$-covers $\U$ of $M$.
\end{defn}

\begin{prop}\label{cechIsGrothendieck}
For a $C_2$-manifold $(M,\tau)$ and an abelian $C_2$-sheaf $\cF$ on $M$, equivariant \v{C}ech cohomology computes equivariant sheaf cohomology, i.e., we have a natural isomorphism 
\[
H^*(M,C_2;\cF)\cong \cH^*(M,C_2;\cF).
\]
\end{prop}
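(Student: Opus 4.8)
The plan is to deduce the statement from the classical (non‑equivariant) comparison of \v{C}ech and sheaf cohomology on paracompact spaces, by realising both sides as abutments of two Hochschild--Serre‑type spectral sequences with isomorphic $E_2$‑pages.

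First, by Remark~\ref{rem:cover_without_fixed_points} the $C_2$‑covers without fixed points are cofinal among all $C_2$‑covers, so it suffices to take the colimit over such $\U=\{U_i\}_{i\in I}$; for these $\tau$ acts freely on $I$, hence freely on the set of multi‑indices $(i_0,\dots,i_p)$, and moreover the underlying ordinary covers of such $\U$ are cofinal among all open covers of $M$. Fix such a $\U$ and let $\mathcal{C}^\bullet(\U,\cF)$ be the Godement \v{C}ech complex of sheaves, $\mathcal{C}^p(\U,\cF)=\prod_{(i_0,\dots,i_p)}(j_{i_0\cdots i_p})_*\bigl(\cF|_{U_{i_0\cdots i_p}}\bigr)$. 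The isomorphisms $\sigma$ and $\tau$ make $\mathcal{C}^\bullet(\U,\cF)$ a complex of $C_2$‑sheaves resolving $\cF$, and $\Gamma^{C_2}\bigl(M,\mathcal{C}^p(\U,\cF)\bigr)=\check{C}^p(\U,\cF)$. Because $\tau$ acts freely on multi‑indices, $\mathcal{C}^p(\U,\cF)$ is, as a $C_2$‑sheaf, coinduced from the trivial subgroup; since coinduction along $\{e\}\hookrightarrow C_2$ is exact and is carried by $\Gamma(M,-)$ to module coinduction, the $C_2$‑module $C^p(\U,\cF):=\Gamma\bigl(M,\mathcal{C}^p(\U,\cF)\bigr)$ is coinduced, hence cohomologically trivial for $C_2$.

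Now consider $\Gamma^{C_2}(M,-)=(-)^{C_2}\circ\Gamma(M,-)$ on $C_2$‑sheaves. The right adjoint $\mathrm{Coind}$ of the exact forgetful functor to ordinary sheaves preserves injectives, every $C_2$‑sheaf embeds into a $\mathrm{Coind}$ of an injective sheaf, and $\Gamma(M,-)$ sends such a $\mathrm{Coind}$ to a coinduced $C_2$‑module; hence $\Gamma(M,-)$ carries injective $C_2$‑sheaves to cohomologically trivial $C_2$‑modules. Fixing an injective resolution $\cF\to\mathcal{I}^\bullet$ of $C_2$‑sheaves, we thus obtain a Grothendieck composite‑functor spectral sequence
\[
E_2^{p,q}=H^p\bigl(C_2;H^q(M;\cF)\bigr)\ \Longrightarrow\ H^{p+q}(M,C_2;\cF).
\]
Running the same double‑complex argument with $\mathcal{I}^\bullet$ replaced by the resolution $\mathcal{C}^\bullet(\U,\cF)$ — legitimate since its groups of global sections are cohomologically trivial — yields a spectral sequence
\[
D_2^{p,q}=H^p\bigl(C_2;\check{H}^q(\U;\cF)\bigr)\ \Longrightarrow\ \cH^{p+q}(\U,C_2;\cF),
\]
where $\check{H}^q(\U;\cF)$ denotes ordinary \v{C}ech cohomology for the underlying cover. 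A comparison map $\mathcal{C}^\bullet(\U,\cF)\to\mathcal{I}^\bullet$ of resolutions lifting $\mathrm{id}_\cF$ (unique up to homotopy, hence compatible with refinement of $\U$), after applying $\Gamma(M,-)$ and a projective $\Z[C_2]$‑resolution of $\Z$, induces a morphism $D\to E$ of spectral sequences which on $E_2$ is $H^p(C_2;-)$ applied to the classical \v{C}ech‑to‑derived map $\check{H}^q(\U;\cF)\to H^q(M;\cF)$, and on abutments is the natural map $\cH^{p+q}(\U,C_2;\cF)\to H^{p+q}(M,C_2;\cF)$.

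Finally, pass to the colimit over $C_2$‑covers $\U$ without fixed points. Since their underlying covers are cofinal among all covers of the paracompact manifold $M$, the classical comparison theorem gives a $C_2$‑equivariant isomorphism $\colim_{\U}\check{H}^q(\U;\cF)\cong H^q(M;\cF)$; as group cohomology commutes with filtered colimits, the induced map $\colim_{\U}D_2^{p,q}\to E_2^{p,q}$ is an isomorphism. A morphism of first‑quadrant spectral sequences that is an isomorphism on $E_2$ is an isomorphism on abutments, so
\[
\cH^{p+q}(M,C_2;\cF)=\colim_{\U}\cH^{p+q}(\U,C_2;\cF)\ \xrightarrow{\ \sim\ }\ H^{p+q}(M,C_2;\cF),
\]
naturally in $\cF$. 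I expect the main obstacle to be the two ``soft'' inputs: verifying that $\Gamma(M,-)$, and likewise the \v{C}ech sheaf functors for a fixed‑point‑free cover, send injective (resp.\ coinduced) $C_2$‑sheaves to cohomologically trivial $C_2$‑modules — which is exactly where the absence of fixed points in the cover (freeness of $\tau$ on multi‑indices) enters — together with assembling the comparison as a genuine morphism of spectral sequences, so that matching $E_2$‑pages forces the isomorphism of abutments; the remaining verifications are bookkeeping with equivariant refinement maps or citations of the non‑equivariant theory.
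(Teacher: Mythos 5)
Your argument is correct, but it takes a genuinely different route from the paper: the paper's entire proof is a citation of Grothendieck's comparison theorem \cite[Theorem 5.5.6]{tohoku}, plus the verification of its hypotheses (the group acts by homeomorphisms, and $M/C_2$ is paracompact because covers of $M/C_2$ lift to locally finite covers of $M$). What you have written is essentially a self-contained proof of that cited theorem in the special case $G=C_2$: Grothendieck's own argument in \S 5.5 of \cite{tohoku} likewise compares the two composite-functor spectral sequences $H^p(G;H^q(M;\cF))$ and $H^p(G;\check{H}^q(\U;\cF))$, with the cohomological triviality of the \v{C}ech cochain modules over fixed-point-free covers playing exactly the role you give it. Your version has the virtue of making visible where each hypothesis enters --- freeness of the induced action on the index set gives coinducedness of $C^p(\U,\cF)$ and hence the collapse of one spectral sequence, while paracompactness of $M$ feeds the classical \v{C}ech-to-derived isomorphism in the colimit --- whereas the paper's version is shorter and instead works with paracompactness of the quotient $M/C_2$. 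The steps you flag as ``soft'' all check out: injective $C_2$-sheaves are summands of coinductions of injective sheaves, $\Gamma(M,-)$ turns these into coinduced $C_2$-modules, and $C_2$-covers without fixed points are cofinal (refine an arbitrary cover $\V$ by $\{V\cap\tau(V')\}$ and double any fixed sets), which also guarantees that equivariant refinement maps can be chosen out of such covers, so the colimit over them agrees with the colimit in Definition \ref{defcechcohom}. The only bookkeeping you should still record explicitly is that the comparison map of resolutions, being unique up to homotopy, induces maps of spectral sequences compatible with refinement, so that the colimit of the maps $\cH^{*}(\U,C_2;\cF)\to H^{*}(M,C_2;\cF)$ is well defined; with that, your proof is complete and independent of the citation.
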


\begin{proof}
This follows from \cite[Theorem 5.5.6]{tohoku} using that $C_2$ acts via homeomorphisms and the fact that the quotient $M/C_2$ is paracompact. 
For the latter, we note that we can lift every cover of $M/C_2$ to $M$, and that this preserves local finiteness, since $C_2$ is a finite group. 
\end{proof}

Now let $\cF^*$ be a complex of $C_2$-sheaves on $(M,\tau)$ of the form  
\[
\cF^* = \ldots \longrightarrow 0\longrightarrow\cF^0\longrightarrow\cF^1\longrightarrow\cF^2\longrightarrow \ldots
\] 
with non-trivial sheaves only in non-negative degrees. 
Let $\U$ be a $C_2$-cover of $M$. 
We construct a double complex $\check{C}^{*,*}(\U,\cF^*)$ by taking the \v{C}ech complexes vertically in each degree of the complex $\cF^*$, i.e., 
\[
\check{C}^{p,q}(\U,\cF^*) = \check{C}^{q}(\U,\cF^p). 
\] 
The vertical differentials $\del^*$ are induced by  the equivariant \v{C}ech complexes in each degree. 
We obtain horizontal maps $\dd^*$ induced by the maps in the complex $\cF^*$. 
The total complex $T^*$ of the double complex $\check{C}^{*,*}(\U,\cF^*)$ is given in degree $k$ by  
\[
T^k(\U,\cF^*) := \bigoplus_{i + j = k}\check{C}^{i,j}(\U,\cF^*),\quad d_T^{i,j} := \del^i + (-1)^{i}\dd^j. 
\] 
We can then define the equivariant \v{C}ech hypercohomology of $M$ with coefficients in $\cF^*$ with respect to the cover $\U$ to be the cohomology of this complex, and denote it by 
\[
\check{\mathbb{H}}^*(\U,C_2;\cF^{*}) := H^*(T^*(\U,\cF^*)).
\]  

\begin{defn}\label{defcechcohomcomplexes}
With the above notation, the equivariant \v{C}ech hypercohomology of $(M,\tau)$ with coefficients in $\cF^*$ is defined as 
\[
\cH^*(M,C_2;\cF^*) := \colim_{\U} \cH^*(\U,C_2;\cF^*) 
\] 
where the colimit is taken over all $C_2$-covers of $(M,\tau)$. 
\end{defn}


\section{A Real Deligne complex}  

Let $(M,\tau)$ be a $C_2$-manifold and let $\TA^k := i\A^k_{\R}$ denote  the sheaf of smooth imaginary-valued $k$-forms on $M$.  
We consider $\TA^k$ as a $C_2$-sheaf with the involution being given by complex conjugation. 
The usual de Rham differential $\dd$ turns $\TA^*$ into a complex of $C_2$-sheaves. 
We let $\TA^{*\leq p}$ denote the truncation at $p$: 
\begin{equation*}
     \TA^{*\leq p} = \left( \TA^0 \xto{\dd} \TA^1 \xto{\dd} ... \xto{\dd} \TA^{p} \to 0 \to \ldots  \right)
\end{equation*}
Let $\iota \colon \TZS \to \TA^0$ denote the natural inclusion. 
In analogy to usual Deligne cohomology (see for example \cite[Chapter I.5]{bryl}), we define the $p$th \emph{Real Deligne complex}, denoted by $\ZDR(p)$, as 
\begin{equation*}
     \TZS \longrightarrow \TA^0\longrightarrow \TA^1 \longrightarrow \cdots \longrightarrow \TA^{p-1} \longrightarrow0,
\end{equation*}
where $\TZS$ is placed in degree 0. 
It is a 
complex of abelian $C_2$-sheaves.
Now we define a version of Real Deligne cohomology as follows:

\begin{defn}\label{Def Real Deligne cohomology}
Let $M=(M,\tau)\in \Manz$ and $p\ge 0$ be an integer. 
The $p$th Real Deligne cohomology of $M$ is defined as the equivariant \v{C}ech hypercohomology 
\[
\HDR^*(M;\Z(p)) := \cH^*(M,C_2;\ZDR(p)).
\]
\end{defn}

\begin{example}\label{NatIso when when p = 0}
For $p=0$, it follows from the definition and Proposition \ref{cechIsGrothendieck} 
that we have a natural isomorphism 
\[
\HDR^{*}(M;\Z(0))\cong H^{*}(M,C_2;\TZS). 
\]
\end{example}

For $p\ge 1$ we have the following lemma:

\begin{lemma}\label{lemma:all_cases_of_p} 
Let $M=(M,\tau)\in \Manz$ and $p\ge 1$ be an integer. 
Let $\U(1)$ denote the sheaf of smooth functions with values in the group $U(1)$ with $C_2$-action given by conjugation. 
Let $\U(1)_p$ denote the following complex of $C_2$-sheaves on $M$ 
\begin{align*}
\U(1)_p : 0 \to \U(1) \xto{\d\log} \TA^1 \xto{\d} \TA^2 \xto{\d} \cdots \xto{\d} \TA^{p-1} \to 0.   
\end{align*}
There is a natural isomorphism of cohomology groups 
\begin{align*}
\HDR^{*}(M;\Z(p)) = \cH^{*-1}(M,C_2;\U(1)_p).
\end{align*}
\end{lemma}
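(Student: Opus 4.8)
The plan is to exhibit a quasi-isomorphism between the Real Deligne complex $\ZDR(p)$ and a shift of the complex $\U(1)_p$, and then to apply the fact that equivariant \v{C}ech hypercohomology sends quasi-isomorphisms of complexes of $C_2$-sheaves to isomorphisms. Concretely, first I would recall the short exact sequence of $C_2$-sheaves
\begin{equation*}
0 \to \TZS \xto{\iota} \TA^0 \xto{\exp} \U(1) \to 0,
\end{equation*}
where $\exp$ sends an imaginary-valued function $f$ to $e^{f}$ (this is a map of $C_2$-sheaves because complex conjugation is compatible with $\exp$ on $i\R$, and it is surjective with kernel $\TZS = 2\pi i \Z$, up to the usual normalisation — here $i\Z$ plays the role of $\tfrac{1}{2\pi i}\Z$ so that $\exp$ is well-defined; I would fix normalisations so that $\d\log \circ \exp = \d$). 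The point is that this short exact sequence identifies the two-term complex $[\TZS \to \TA^0]$ (in degrees $0,1$) with $\U(1)$ placed in degree $1$.

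Next I would assemble this into a morphism of complexes of $C_2$-sheaves. The complex $\ZDR(p)$ is $\TZS \to \TA^0 \to \TA^1 \to \cdots \to \TA^{p-1} \to 0$ with $\TZS$ in degree $0$; the complex $\U(1)_p[-1]$ is $0 \to \U(1) \to \TA^1 \to \cdots \to \TA^{p-1} \to 0$ with $\U(1)$ in degree $1$. I would define a chain map $\ZDR(p) \to \U(1)_p[-1]$ which is zero in degree $0$, is $\exp \colon \TA^0 \to \U(1)$ in degree $1$, and is the identity on $\TA^j$ in degrees $2 \le j+1 \le p$. One checks this commutes with the differentials: the only nontrivial square is $\TA^0 \to \TA^1$ versus $\U(1) \xto{\d\log} \TA^1$, and commutativity here is exactly the normalisation $\d\log \circ \exp = \d$. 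The mapping cone of this chain map is, up to a shift, the complex $[\TZS \xto{\iota} \TA^0]$ concentrated in degrees $0,1$, which is quasi-isomorphic to $0$ by the exactness of the displayed short exact sequence; hence the chain map is a quasi-isomorphism of complexes of $C_2$-sheaves. (Alternatively, one can argue directly with the long exact sequence in \v{C}ech hypercohomology associated to the short exact sequence of complexes $0 \to [\TZS \to 0 \to \cdots] \to \ZDR(p) \to \U(1)_p[-1] \to 0$, but it is cleaner to phrase it as a quasi-isomorphism.)

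Finally I would invoke invariance of equivariant \v{C}ech hypercohomology under quasi-isomorphism. Since $\cH^*(M,C_2;-)$ is computed by the total complex of a double \v{C}ech complex and then a colimit over $C_2$-covers, and since on each cover the \v{C}ech functors $\check C^q(\U,-)$ are exact (the \v{C}ech complex of a sheaf is built from products of sections over intersections, and on a sufficiently fine cover the relevant sequences of sections are exact — more robustly, one passes to the colimit where exactness of the \v{C}ech resolution against $H^*(-,C_2;-)$ is guaranteed by Proposition~\ref{cechIsGrothendieck}), a quasi-isomorphism of complexes of $C_2$-sheaves induces an isomorphism on $\cH^*$. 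This yields
\begin{equation*}
\HDR^*(M;\Z(p)) = \cH^*(M,C_2;\ZDR(p)) \cong \cH^*(M,C_2;\U(1)_p[-1]) = \cH^{*-1}(M,C_2;\U(1)_p),
\end{equation*}
which is the claim. The main obstacle I anticipate is the last step: verifying carefully that equivariant \v{C}ech hypercohomology is genuinely insensitive to quasi-isomorphism at the level of complexes of $C_2$-sheaves — i.e. that the hyperderived-functor interpretation is available equivariantly. This requires either a hypercohomology spectral sequence argument (filtering the double complex and using Proposition~\ref{cechIsGrothendieck} to identify the $E_2$-page with equivariant sheaf cohomology of the cohomology sheaves of $\ZDR(p)$ and of $\U(1)_p[-1]$, which agree), or a direct cone argument on total complexes combined with the fact that passing to the colimit over $C_2$-covers kills the error terms; pinning down which of these the paper wants, and confirming the colimit behaves well, is where the real care is needed. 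The normalisation bookkeeping with the factor $2\pi i$ is routine but must be stated explicitly so that $\d\log \circ \exp = \d$ holds on the nose.
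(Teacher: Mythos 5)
Your overall route is the same as the paper's: exhibit the exponential map as a degreewise morphism of complexes of $C_2$-sheaves from $\ZDR(p)$ to the shift $\U(1)_p[-1]$ (zero on $\TZS$, $\exp$ on $\TA^0$, essentially the identity on $\TA^j$ for $j\ge 1$ after fixing the $2\pi$ normalisation), observe that it is a quasi-isomorphism, and conclude by invariance of equivariant \v{C}ech hypercohomology under quasi-isomorphism, the shift of $\U(1)$ into degree $1$ accounting for the $*-1$. The paper simply displays this diagram and asserts the quasi-isomorphism; your closing discussion of why $\cH^*(M,C_2;-)$ respects quasi-isomorphisms (hypercohomology spectral sequence after passing to the colimit over $C_2$-covers) is more care than the paper itself takes, and is the right thing to worry about.

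However, your justification that the chain map is a quasi-isomorphism would fail as written. The mapping cone of $\ZDR(p)\to\U(1)_p[-1]$ is not, up to shift, the two-term complex $[\TZS\xto{\iota}\TA^0]$; and, more importantly, that two-term complex is \emph{not} acyclic: it has $H^0=0$ but $H^1=\TA^0/\iota(\TZS)\cong\U(1)\neq 0$ --- indeed this nonvanishing $H^1$ is exactly what the exponential sequence records. Likewise the parenthetical ``short exact sequence of complexes'' $0\to[\TZS\to 0\to\cdots]\to\ZDR(p)\to\U(1)_p[-1]\to 0$ is not exact in degree $1$, since $\exp\colon\TA^0\to\U(1)$ has kernel $\TZS$, not $0$. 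The correct argument is dual to the one you wrote: your chain map is degreewise \emph{surjective} as a map of sheaves, and its \emph{kernel} is the complex $\TZS\xto{\ \sim\ }\ker(\exp)\cong\TZS$ concentrated in degrees $0$ and $1$ with an isomorphism as differential, which is acyclic; hence the cone is acyclic and the map is a quasi-isomorphism. (Equivalently, compare cohomology sheaves directly: both complexes have vanishing $H^0$, both have $H^1$ isomorphic to the locally constant $U(1)$-sheaf --- via $\exp$ identifying $\ker(\d)/\TZS$ with $\ker(\d\log)$ --- and they literally coincide in degrees $\ge 2$.) With that repair, and the $2\pi$ bookkeeping made explicit so that $\d\log\circ\exp$ agrees with $\d$ up to the chosen scalar, your proof matches the paper's.
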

\begin{proof}
We have the quasi-isomorphism of complexes of $C_2$-sheaves 
\begin{align*}
    \xymatrix{ 
    0 \ar[r] \ar[d] & \TZS \ar[r] \ar[d] & \overline{\A}^0 \ar[r]^{\d} \ar[d]^{\exp(2\pi-)} & \overline{\A}^1\ar[r]^{\d} \ar[d]^{\cdot 2\pi} & \TA^2 \ar[d]^{\id} \ar[r]^{\d} & \cdots \ar[r]^{\d} & \TA^{p-1} \ar[d]^{\id} \ar[r] & 0 \ar[d]\\
    0 \ar[r] & 0 \ar[r] & \U(1) \ar[r]^{\d\log} & \overline{\A}^1 \ar[r]^{\d} & \TA^2 \ar[r]^{\d} & \cdots \ar[r]^{\d} & \TA^{p-1}  \ar[r] & 0. 
    }
\end{align*}
Since $\U(1)$ is placed in degree $1$ in the bottom complex, the above quasi-isomorphism induced the desired isomorphism.  
\end{proof}

\begin{theorem}\label{HD2 classifies real line bundles with connection}
Let $(M,\tau)$ be a $C_2$-manifold. 
The group of isomorphism classes of Real line bundles with Real connection is isomorphic to the group  $\HDR^{2}(M;\Z(2))$.
\end{theorem}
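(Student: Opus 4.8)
The plan is to compute $\HDR^2(M;\Z(2))$ via the \v{C}ech description and match cocycles against the classical Deligne-style description of bundles with connection. By Lemma \ref{lemma:all_cases_of_p} with $p=2$, we have $\HDR^2(M;\Z(2)) \cong \cH^1(M,C_2;\U(1)_2)$, where $\U(1)_2$ is the two-term complex $\U(1) \xto{\d\log} \TA^1$ sitting in degrees $1$ and $2$. So the first step is to unwind $\cH^1$ of the associated double complex: a degree-$1$ class is represented on a $C_2$-cover $\U = \{U_i\}$ without fixed points by a pair $(\{g_{ij}\}, \{A_i\})$ with $g_{ij} \colon U_{ij}\to U(1)$, $A_i \in \TA^1(U_i)$, subject to the cocycle condition $g_{ij}g_{jk}g_{ik}^{-1} = 1$, the compatibility $A_j - A_i = \d\log g_{ij}$, and the equivariance constraints inherited from $\check{C}^*(\U,-)$, namely $\overline{g_{\tau(i)\tau(j)}\circ\tau} = g_{ij}$ and $\overline{\tau^*A_{\tau(i)}} = A_i$. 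Coboundaries are given by $(\{h_i h_j^{-1}\}, \{\d\log h_i\})$ for equivariant $0$-cochains $\{h_i\}$ with $\overline{h_{\tau(i)}\circ\tau}=h_i$.

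The second step is the translation to geometry. The cocycle $\{g_{ij}\}$ is exactly the transition data of a complex line bundle $L\to M$; the extra equivariance $\overline{g_{\tau(i)\tau(j)}\circ\tau}=g_{ij}$ is precisely the condition that the antilinear patching makes $L$ into a Real line bundle in the sense of Definition \ref{defOfRealVecBundles} — this is the content of Example \ref{cohomOfTZ} / \cite{kahn} at the level of $H^1(-;\U(1))$, and I would cite that directly. The local $1$-forms $\{A_i\}$ with $A_j - A_i = \d\log g_{ij}$ glue to a connection $\nabla$ on $L$ (locally $\nabla = \d + A_i$); since the $A_i$ are imaginary-valued this is a $U(1)$-connection, and the constraint $\overline{\tau^*A_{\tau(i)}} = A_i$ says exactly that $\nabla$ intertwines the involutions on $0$- and $1$-forms, i.e. $\nabla\circ\sigma_0 = \sigma_1\circ\nabla$, which is the definition of a Real connection. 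Conversely, given a Real line bundle with Real connection, choosing an equivariant trivializing cover (Proposition \ref{prop:trivialization}, Remark \ref{rem:trivializing_cover}, Remark \ref{rem:cover_without_fixed_points}) produces such a cocycle pair. One then checks that two cocycles differ by a coboundary if and only if the corresponding Real line bundles with Real connection are isomorphic as such: an isomorphism is a fiberwise-linear equivariant bundle map commuting with the connections, which on a common refinement is given by functions $h_i\colon U_i\to U(1)$ with $\overline{h_{\tau(i)}\circ\tau}=h_i$ and $\d\log h_i = A_i' - A_i$ — exactly the coboundary relation. Finally, the group structure: tensor product of Real line bundles with Real connection corresponds to multiplication of the $g_{ij}$ and addition of the $A_i$, matching the abelian group structure on $\cH^1$, so the bijection is a group isomorphism.

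The main obstacle is the bookkeeping around covers and equivariance. One must be careful that passing to a common refinement of two equivariant covers can again be taken equivariant and without fixed points (using Remark \ref{rem:cover_without_fixed_points}), so that comparison of cocycles and the colimit over $\U$ in Definition \ref{defcechcohomcomplexes} behave well; and one must verify that every isomorphism of Real line bundles with Real connection can indeed be trivialized on such a cover, which again rests on the equivariant local triviality of Proposition \ref{prop:trivialization}. A second, more routine point is checking that the equivariance condition on $\{g_{ij}\}$ matches the standard classification of Real line bundles by $H^2_{C_2}(M;\Z(1))$ rather than by ordinary $H^2(M;\Z)$ — but this is already recorded in Example \ref{cohomOfTZ}, so I would invoke it rather than reprove it. Modulo these cover manipulations, the proof is a direct dictionary between \v{C}ech $1$-cocycles for $\U(1)_2$ and the geometric objects, exactly parallel to the non-equivariant case in \cite[Chapter I.5]{bryl}.
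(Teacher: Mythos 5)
Your proposal is correct and follows essentially the same route as the paper: reduce via Lemma \ref{lemma:all_cases_of_p} to $\cH^{1}(M,C_2;\U(1)\to\TA^{1})$ and then set up the Brylinski-style dictionary between equivariant \v{C}ech cocycle pairs $(g,\omega)$ and Real line bundles with Real connection, with equivariant trivializing covers supplied by Proposition \ref{prop:trivialization}. The only difference is organizational (you build the dictionary cocycle-first and spell out the coboundary/isomorphism correspondence, where the paper constructs the cocycle from the bundle and cites \cite[Theorem 2.2.11]{bryl} for bijectivity), plus an immaterial sign convention in the relation between $A_j-A_i$ and $\d\log g_{ij}$.
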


\begin{proof}
By Lemma \ref{lemma:all_cases_of_p}, we have the following isomorphism for $p=2$: 
\begin{align*}
\HDR^{*}(M;\Z(2)) \cong \cH^{*-1}(M,C_2;\U(1)\to\overline{\A}^1).
\end{align*}
We will now show that $\cH^{1}(M,C_2;\U(1)\to\overline{\A}^1)$ is isomorphic to the group of isomorphism classes of Real $U(1)$-bundles with Real connection. 
The computation is similar to arguments in \cite[\S 2.2]{bryl}.  
Let $(L,\nabla)$ be a pair consisting of a Real $U(1)$-bundle $L$ and a Real connection $\nabla$. 
We can represent the pair $(L,\nabla)$ on a trivializing $C_2$-cover $\V = \{V_i\}$ as $(s_i, A_i)$, where $s_i$ are Real sections and $A_i$ local Real connection $1$-forms. Note that, since the $s_i$ are equivariant, we have $s = (s_i)_{i\in I}\in\check{C}^0(\V;\TA^0)$. 
For each $i,j$ we form the section $g_{ij} = \del(s) = \frac{s_i}{s_j}\in\Gamma(V_{ij},U(1))$, where $\del$ denotes the \v{C}ech coboundary operator and $V_{ij} = V_i\bigcap V_j$. 
The sections $g_{ij}$ are equivariant, since conjugation commutes with taking fractions for complex numbers, and they satisfy the cocycle condition. 
Hence the $g_{ij}$ induce an equivariant \v{C}ech cocycle  $g\in\check{C}^1(\V,U(1))$.  
Next, we define the 1-form $\omega_i = \frac{\nabla(s_i)}{s_i}$ on every open set $V_i$. 
Since the connection is Hermitian, we know by \cite[2.2.16]{bryl} that each $\omega_i$ is a purely imaginary 1-form. 
Hence they induce an element $\omega \in \check{C}^0(\V;\TA^1)$.
Moreover, on each intersection $V_{ij}$, we have  
\begin{align*}
    \omega_i - \omega_j &= \frac{\nabla(s_i)}{s_i} - \frac{\nabla(s_j)}{s_j}\\
    &= \frac{\nabla(g_{ij}s_j)}{g_{ij}s_j} - \frac{\nabla(g_{ji}s_i)}{g_{ji}s_i}\\
    & = \frac{g_{ij}\nabla(s_j) + \d g_{ij}\otimes s_j}{g_{ij}s_j} - \frac{g_{ji}\nabla(s_i) + \d g_{ji} \otimes s_i}{g_{ji}s_i}\\
    &= -\left( \frac{\nabla(s_i)}{s_i} - \frac{\nabla(s_j)}{s_j} \right) + 2\d g_{ij}\frac{1}{g_{ij}}.
\end{align*}
Combining the top and bottom equalities  
we get $\omega_i - \omega_j = \d\log(g_{ij})$ on $V_{ij}$.   
Thus $(g,\omega)$ defines an element of the first equivariant \v{C}ech hypercohomology of the complex of sheaves $\U(1) \to \TA^1$, i.e., $(g,\omega)\in \cH^*(\V;\U(1)\to\TA^1)$. 
It is straight-forward to show that the class of $(g,\omega)$ is independent of the section used to represent the line bundle $L$. 
Hence we have a well-defined map from the set of isomorphism classes of Real line bundles with Real connection that are trivializable over the cover $\V$ to $\cH^1(\V;\U(1)\to\TA^1)$. 
Taking the direct limit of these maps over all $C_2$-covers 
we get a map to $\cH^1(M,C_2; \U(1)\to\TA^1)$. 
It is clear from the construction that $(L,\nabla) + (L',\nabla') = (L\otimes L', \nabla+\nabla')$ is sent to the sum of classes $(g,\omega)+(g',\omega')$, 
i.e., the map we 
constructed is a homomorphism of groups.  
Surjectivity and injectivity of the map follow in the same way as in \cite[Proof of Theorem 2.2.11]{bryl}. 
\end{proof}


\begin{prop}\label{H2D3 classifies real line bundles with flat connection}
Let $(M,\tau)$ be a $C_2$-manifold.  
The group of isomorphism classes of Real line bundles with flat Real connection is isomorphic to  the group $\HDR^{2}(M;\Z(p))$ for all $p>2$.
\end{prop}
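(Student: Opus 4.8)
The plan is to use Lemma \ref{lemma:all_cases_of_p} to reduce the computation of $\HDR^{2}(M;\Z(p))$ for $p > 2$ to the equivariant \v{C}ech hypercohomology $\cH^{1}(M,C_2;\U(1)_p)$, where $\U(1)_p$ is the complex $\U(1) \xto{\d\log} \TA^1 \xto{\d} \cdots \xto{\d} \TA^{p-1} \to 0$ with $\U(1)$ in degree $1$. A degree-$1$ hypercohomology class of a complex starting in degree $1$ is represented by a $0$-cochain in each sheaf of the complex, i.e.\ by data $(g_{ij}, \omega_i, \eta_i^{(2)}, \ldots)$ living in $\check{C}^1(\V,\U(1)) \oplus \check{C}^0(\V,\TA^1) \oplus \check{C}^0(\V,\TA^2) \oplus \cdots$, subject to the total-complex cocycle condition. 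The key observation is that the cocycle condition in the top \v{C}ech-row forces the components of the class in $\check{C}^0(\V,\TA^k)$ for $k \geq 2$ to be \v{C}ech $0$-cocycles, i.e.\ to glue to a global invariant form; but the differential in $\U(1)_p$ sends such a global $k$-form to its exterior derivative, so the component in degree $2$ (namely $\omega$) must be \emph{closed}, and the components in degrees $\geq 3$ can be killed by a hypercohomology coboundary (since for $k\geq 2$ the global forms appear as an exact complex $\TA^2 \xto{\d} \TA^3 \to \cdots$ up to the top, modulo the fact that the complex is truncated — this is where one must be slightly careful at the top degree $p-1$).

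\textbf{Key steps, in order.} First, I would invoke Lemma \ref{lemma:all_cases_of_p} to get $\HDR^{2}(M;\Z(p)) \cong \cH^{1}(M,C_2;\U(1)_p)$. Second, I would compare the truncated complex $\U(1)_p$ with the complex $\U(1)_3 = (\U(1) \xto{\d\log} \TA^1 \xto{\d} \TA^2 \to 0)$ — I would argue that the obvious truncation map $\U(1)_p \to \U(1)_3$ induces an isomorphism on $\cH^1$. This is the heart of the argument: the extra sheaves $\TA^3, \ldots, \TA^{p-1}$ only affect $\cH^{\geq 2}$ of the complex, because they sit in complex-degrees $\geq 3$, hence contribute to total degree $\geq 3$ when paired with a \v{C}ech $0$-cochain and to even higher total degree otherwise; more precisely, truncating a complex of sheaves below a given degree does not change hypercohomology below that degree (a standard stupid-truncation spectral-sequence argument, equivariantly valid since everything is done over $C_2$-covers and the \v{C}ech-to-derived comparison of Proposition \ref{cechIsGrothendieck} applies). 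Third, by Theorem \ref{HD2 classifies real line bundles with connection} applied with the complex $\U(1) \to \TA^1$, together with the analysis of the extra $\TA^2$ term, I would identify $\cH^{1}(M,C_2;\U(1)_3)$ with the subgroup of $\cH^{1}(M,C_2;\U(1)\to\TA^1)$ — that is, of Real line bundles with Real connection — on which the curvature vanishes: the pair $(g,\omega)$ from the proof of Theorem \ref{HD2 classifies real line bundles with connection} lifts to a hypercohomology class of $\U(1)_3$ precisely when there exists a global invariant $2$-form trivializing $\d\omega$ in the \v{C}ech complex, and since $\omega_i - \omega_j = \d\log g_{ij}$ forces $\d\omega_i = \d\omega_j$, the curvature $F = \d\omega_i$ is already a global invariant $2$-form; the lift exists iff $F = 0$, i.e.\ iff the connection is flat. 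Finally, I would check that the bijection is compatible with the group structures exactly as in the proof of Theorem \ref{HD2 classifies real line bundles with connection}, giving a group isomorphism.

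\textbf{Main obstacle.} The main obstacle I anticipate is the bookkeeping at the \emph{top} of the truncated complex: because $\U(1)_p$ stops at $\TA^{p-1}$ rather than continuing, one cannot blindly cite "truncation doesn't affect low-degree hypercohomology" without confirming that the relevant degree ($\cH^1$) is genuinely below the truncation point for all $p > 2$ — which it is, since the first sheaf beyond $\U(1)\to\TA^1\to\TA^2$ sits in complex-degree $3$ and hence cannot influence total degree $1$. One must also verify that passing to the colimit over $C_2$-covers commutes with all of these identifications, which follows since filtered colimits are exact and every step is natural in $\V$. A secondary point requiring care is that the argument identifying the flat locus should be phrased at the level of \v{C}ech cochains on a fixed cover and only then passed to the colimit, to avoid subtleties about whether "global invariant $2$-form" is detected at finite level — but since $M$ is a manifold and a global form restricts to any cover, this is harmless.
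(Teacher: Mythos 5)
Your proposal is correct and follows essentially the same route as the paper: reduce via Lemma \ref{lemma:all_cases_of_p}, note that the terms $\TA^3,\dots,\TA^{p-1}$ sit in complex degrees $\ge 3$ and hence cannot affect total degree $1$ (so $p>3$ reduces to $p=3$), and identify the extra cocycle condition $\d\omega_i=0$ with flatness. One minor slip in your plan paragraph: a total-degree-$1$ cochain of $\U(1)_p$ has no components $\eta_i^{(k)}\in\check{C}^0(\V,\TA^k)$ for $k\ge 2$ (those would require $\check{C}^{-1}$ terms); the condition $\d\omega=0$ arises only as a component of the coboundary in total degree $2$, which is exactly how your later key steps (correctly) treat it.
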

\begin{proof}
First we assume $p=3$. 
By Lemma \ref{lemma:all_cases_of_p} we have  the isomorphism 
\begin{align*}
\HDR^{*}(M;\Z(3)) &\cong \cH^{* - 1}\left(M,C_2;\U(1) \to \overline{\A}^1 \to \TA^2\right). 
\end{align*}
We note that in degree $*=2$ the elements of the \v{C}ech hypercohomology group 
\[
\cH^{1}\left(M,C_2;\U(1)\to\overline{\A}^1\to\TA^2\right)
\] 
are given by pairs $(g,\omega)$ as in the proof of Theorem \ref{HD2 classifies real line bundles with connection} with the added requirement that $\d\omega_i = 0$ on every open set $U_i$. 
After identifying such a pair with a Real bundle with a Real connection, this extra requirement corresponds to flatness of the Real connection, i.e., that the curvature of the connection is zero, 
where the curvature of a Real connection is defined in the same way as for ordinary connections. 
For $p>2$, we can use the same argument as for the case $p=3$ by observing that the respective Deligne complexes coincide in low degrees. 
\end{proof}


\section{Computing Real Deligne cohomology}

For every $p$, the complex $\ZDR(p)$ sits in the short exact sequence of $C_2$-sheaves 
\begin{align}\label{SES for Deligne}
    0\longrightarrow \TA^{*\leq p - 1}[-1] \longrightarrow \ZDR(p) \longrightarrow 
    \TZS \longrightarrow 0.
\end{align}
Let $f \colon M \to M/C_2$ denote the canonical projection. 
Following \cite[\S 5.1]{tohoku}, for every $k$, the sheaf $f_*\TA^k=(\TA^k)^{C_2}$ on $M/C_2$ is defined as the sheaf whose sections on an open $U \subset M/C_2$ are given by the $C_2$-invariant elements of $\TA^k(f^{-1}(U))$, i.e., 
\begin{align*}
(\TA^k)^{C_2}(U) = (\TA^k(f^{-1}(U))^{C_2}. 
\end{align*}
To simplify the notation we denote the space of global sections of $(\TA^k)^{C_2}$ by the action by 
\begin{equation*}
    \Eh^k(M) := (\TA^k)^{C_2}(M) = \left\{\omega\in\TA^k(M) ~ | ~ \overline{\tau^*(\omega)} = \omega\right\}.
\end{equation*}  
For $k=0$, $\mathcal{E}^0(M) = C^{\infty}_{\mathcal{R}}(M,i\R)$ is the group of smooth Real imaginary-valued functions. 
We note that $\Eh^*(M)$ forms a cochain complex with differential $\dd \colon \Eh^k(M) \to \Eh^{k+1}(M)$ induced by the differential in $\TA^*(M)$. 
We then write $\Eh^{p-1}/\dd\mathcal{E}^{p-2}(M)$ for the quotient $\Eh^{p-1}(M)/\dd\mathcal{E}^{p-2}(M)$. 
The following result computes the cohomology of $\TA^{*\leq p - 1}[-1]$: 

\begin{prop}\label{prop:coho_of_Ak}
For $(M,\tau)\in \Manz$, there are isomorphisms 
\begin{equation*}
    \cH^j\left(M,C_2;\TA^{*\leq p - 1}\right) = 
    \begin{cases}
    H^j(M,C_2;i\R) \quad &j < p - 1\\
    \Eh^{p-1}/\dd\mathcal{E}^{p-2}(M) & j = p - 1\\
    0 & j > p - 1.
    \end{cases}
\end{equation*}
\end{prop}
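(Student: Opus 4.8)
The plan is to compute the hypercohomology of the truncated complex $\TA^{*\leq p-1}$ directly, exploiting that each $\TA^k$ is a fine (equivalently, $\Gamma^{C_2}$-acyclic) $C_2$-sheaf, so that the hypercohomology spectral sequence degenerates to the cohomology of the complex of global invariant sections $\Eh^*(M)$ truncated at $p-1$. First I would observe that $\TA^k$ admits equivariant partitions of unity (Remark \ref{rem:equivariant_partition}), so $\TA^k$ is a fine $C_2$-sheaf and hence $H^j(M,C_2;\TA^k) = 0$ for $j>0$ while $H^0(M,C_2;\TA^k) = \Eh^k(M)$. By Proposition \ref{cechIsGrothendieck} the same vanishing holds for equivariant \v{C}ech cohomology, so in the double complex $\check{C}^{*,*}(\U,\TA^{*\leq p-1})$ computing $\cH^*(M,C_2;\TA^{*\leq p-1})$, the columns have cohomology concentrated in \v{C}ech-degree $0$, where they give the complex $\Eh^0(M)\to\Eh^1(M)\to\cdots\to\Eh^{p-1}(M)\to 0$.

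The key steps, in order, are: (1) establish fineness/acyclicity of each $\TA^k$ as a $C_2$-sheaf using equivariant partitions of unity; (2) run the hypercohomology spectral sequence of the double complex (filtering by the \v{C}ech direction), noting its $E_1$-page has a single nonzero row, namely $E_1^{j,0} = \Eh^j(M)$ for $0\le j\le p-1$ with differential the de Rham $\dd$; (3) conclude $\cH^j(M,C_2;\TA^{*\leq p-1}) = H^j(\Eh^{*\leq p-1}(M),\dd)$; (4) identify this cohomology in the three ranges: for $j<p-1$ it is the ordinary de Rham cohomology $H^j_{\dR}(\Eh^*(M))$, which by the equivariant de Rham theorem (the complex $\TA^*$ being an acyclic resolution of $i\R$ as $C_2$-sheaves) equals $H^j(M,C_2;i\R)$; for $j=p-1$ the truncation kills the image of no differential into $\Eh^p$, so one gets exactly the cokernel $\Eh^{p-1}(M)/\dd\Eh^{p-2}(M)$; for $j>p-1$ the complex vanishes.

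The main obstacle will be step (4) in the range $j<p-1$: justifying that $H^j$ of the global invariant de Rham complex $\Eh^*(M)$ computes the equivariant sheaf cohomology $H^j(M,C_2;i\R)$. This is an \emph{equivariant} de Rham theorem, and the cleanest route is to note that $0\to i\R\to\TA^0\xto{\dd}\TA^1\xto{\dd}\cdots$ is an exact sequence of $C_2$-sheaves (Poincar\'e lemma, which is a local statement and so insensitive to the involution) with each $\TA^k$ being $\Gamma^{C_2}$-acyclic by step (1); hence it is an acyclic resolution of the $C_2$-sheaf $i\R$, and applying $\Gamma^{C_2}$ computes $H^*(M,C_2;i\R)$ as the cohomology of $\Eh^*(M)$. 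One must be slightly careful that $\Gamma^{C_2}$ is left exact and that acyclicity for the derived functor of $\Gamma^{C_2}$ (not merely of $\Gamma$) is what is needed — but fineness gives vanishing of all higher equivariant cohomology by the partition-of-unity argument, so this goes through. The remaining bookkeeping — the edge maps of the spectral sequence, compatibility of the colimit over covers with taking cohomology — is routine and I would not belabor it.
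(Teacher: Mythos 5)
Your proposal is correct and follows the same overall skeleton as the paper's proof: show each $\TA^k$ is acyclic for the equivariant theory, collapse the hypercohomology of the truncated complex to the cochain complex of invariant global sections $\Eh^0(M)\to\cdots\to\Eh^{p-1}(M)$, and then identify its cohomology via an equivariant de Rham theorem (with the $j=p-1$ case giving the cokernel $\Eh^{p-1}/\dd\Eh^{p-2}(M)$ because of the truncation). The one step you handle by a genuinely different mechanism is the acyclicity itself: the paper first invokes Grothendieck's comparison $H^*(M,C_2;\TA^k)\cong H^*\bigl(M/C_2,(\TA^k)^{C_2}\bigr)$ and then argues that the invariant direct image sheaf on the quotient is flasque (via partitions of unity and a geodesically convex cover), whereas you stay on $M$ and argue that $\TA^k$ is a fine $C_2$-sheaf, using equivariant partitions of unity to build a \v{C}ech homotopy operator that kills positive-degree equivariant cocycles. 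Your route is more self-contained and meshes directly with the \v{C}ech hypercohomology in which the statement is phrased; the paper's route outsources the hard part to Tohoku but implicitly relies on the vanishing of the higher $C_2$-group cohomology of sheaves of $\R$-vector spaces, a point your direct argument sidesteps. One small thing worth writing out if you flesh this into a full proof: check that the standard contraction $(h\omega)_{i_0\cdots i_{q-1}}=\sum_k\phi_k\,\omega_{ki_0\cdots i_{q-1}}$ really lands in the \emph{equivariant} cochains — it does, because the $\phi_k$ are real-valued and satisfy $\phi_k\circ\tau=\phi_{\tau(k)}$, so reindexing the sum by $k\mapsto\tau(k)$ gives the required identity — and that every $C_2$-cover is refined by one admitting a subordinate equivariant partition of unity (Remark \ref{rem:equivariant_partition}), so the vanishing survives the colimit over covers.
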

\begin{proof} 
By \cite[Corollary 1 of Theorem 5.3.1]{tohoku} we know that $H^*\left(M,C_2;\TA^k\right)$ is isomorphic to $H^*\left(M/C_2,(\TA^k)^{C_2}\right)$. 
Using partitions of unity and a geodesically convex cover, it follows that the sheaf $(\TA^k)^{C_2}$ is flasque and hence acyclic.  
The assertion now follows from an equivariant version of de Rham's theorem as in \cite[Theorem 2.2]{GZ} and the definition of $\Eh^k(M)$.   
\end{proof}

Proposition \ref{prop:coho_of_Ak} implies that  sequence \eqref{SES for Deligne} induces a long exact sequence of the form 
\begin{align*}
    \xymatrix{
   \cdots \ar[r] & H^{p-2}(M,C_2;i\R) \ar[r] & \HDR^{p-1}(M;\Z(p))\ar[r] & H^{p - 1}(M,C_2;\TZS)\ar[r]& \\
    \ar[r] & \Eh^{p-1}/\dd\Eh^{p-2}(M) \ar[r] & \HDR^{p}(M;\Z(p))\ar[r] & H^{p}(M,C_2;\TZS)\ar[r]& \\
    \ar[r] & 0 \ar[r] & \HDR^{p+1}(M;\Z(p))\ar[r] & H^{p+1}(M,C_2;\TZS)\ar[r] & \cdots 
    }
\end{align*}
In degrees $q>p$ we obtain an isomorphism $\HDR^{q}(M,\Z(p))\cong H^{q}(M,C_2;\TZS)$. 
The cases of degree $q \leq p$, however, are more interesting. 
First we consider $q<p$. 
We note, as in \cite{bryl}, that the connecting morphism 
\begin{align*}
\delta \colon H^*(M,C_2;\TZS) \to H^{*+1}(M,C_2;i\R)
\end{align*}
includes integral imaginary forms into the group of all imaginary-valued forms. 
The connecting morphism $\delta$ 
kills torsion and its image in $H^{*+1}(M,C_2;i\R)$ is the free part of $H^*(M,C_2;\TZS)$.  
This shows the following result: 

\begin{prop}\label{SES in degree q < p}
For every $q < p$, we have the following exact sequence
\begin{align*}
    0\to H^{q-1}(M,C_2;i\R)/H^{q - 1}(M,C_2;\TZS)_{\mathrm{free}} \to \HDR^{q}(M;\Z(p)) \to H^{q}(M,C_2;\TZS)_{\mathrm{tors}}\to 0
\end{align*}
where the subscript \emph{free} denotes the free part and \emph{tors} the torsion part of the respective groups.  \qed
\end{prop}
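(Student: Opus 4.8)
The plan is to extract the claimed short exact sequence directly from the long exact sequence displayed just before the statement, by identifying the kernel and image of the connecting homomorphism $\delta$ in the relevant degrees. Since $q < p$, Proposition~\ref{prop:coho_of_Ak} gives $\cH^{j}(M,C_2;\TA^{*\leq p-1}) = H^{j}(M,C_2;i\R)$ for all $j \le q$ (indeed for all $j < p-1$, which covers $j = q-1$ and $j = q$). Thus the piece of the long exact sequence associated to \eqref{SES for Deligne} around degree $q$ reads
\begin{align*}
H^{q-1}(M,C_2;i\R) \xrightarrow{\delta} H^{q-1}(M,C_2;\TZS) \to \HDR^{q}(M;\Z(p)) \to H^{q}(M,C_2;\TZS) \xrightarrow{\delta} H^{q}(M,C_2;i\R),
\end{align*}
where I have reindexed so that the shift $[-1]$ in $\TA^{*\leq p-1}[-1]$ is accounted for. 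From this four-term exact sequence I would read off the short exact sequence
\begin{align*}
0 \to \coker\!\left(\delta \colon H^{q-1}(M,C_2;i\R) \to H^{q-1}(M,C_2;\TZS)\right) \to \HDR^{q}(M;\Z(p)) \to \ker\!\left(\delta \colon H^{q}(M,C_2;\TZS) \to H^{q}(M,C_2;i\R)\right) \to 0.
\end{align*}

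The remaining work is to identify the outer two groups with the ones in the statement. For the kernel on the right: the map $\delta$ here is, up to sign, the map induced by the inclusion $\TZS \to i\R$ of locally constant $C_2$-sheaves (this is the content of the remark preceding the proposition, following the same reasoning as in \cite{bryl}). Since $i\R$ is a $\Q$-vector space, this map kills torsion; and since rationally the inclusion $\TZS \otimes \Q \cong i\R$ is an isomorphism on the relevant cohomology (the equivariant cohomology with $i\R$-coefficients is the rationalization of that with $\TZS$-coefficients, $C_2$ being finite), the kernel is exactly the torsion subgroup $H^{q}(M,C_2;\TZS)_{\mathrm{tors}}$. For the cokernel on the left: by the same token, the image of $\delta$ in $H^{q-1}(M,C_2;i\R)$ is the image of $H^{q-1}(M,C_2;\TZS)$, which is a free abelian group isomorphic to $H^{q-1}(M,C_2;\TZS)_{\mathrm{free}} = H^{q-1}(M,C_2;\TZS)/H^{q-1}(M,C_2;\TZS)_{\mathrm{tors}}$; hence the cokernel is $H^{q-1}(M,C_2;i\R)/H^{q-1}(M,C_2;\TZS)_{\mathrm{free}}$, as claimed. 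Substituting these two identifications into the short exact sequence above yields the assertion.

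I expect the main obstacle to be the bookkeeping around the degree shift $[-1]$ and the precise identification of $\delta$ with the map induced by $\TZS \hookrightarrow i\R$ — in particular, verifying that the connecting homomorphism of \eqref{SES for Deligne}, composed with the isomorphism of Proposition~\ref{prop:coho_of_Ak}, is genuinely this inclusion-induced map and not something twisted by the de Rham differential. This is the step where one must look carefully at the snake-lemma construction for the complex $\TA^{*\leq p-1}[-1]$ and use that a closed imaginary-valued form representing an integral class is exactly an element in the image of $\TZS$; the argument is the equivariant analogue of the one in \cite[\S 1.5]{bryl}, and the finiteness of $C_2$ guarantees that passing to $C_2$-invariants does not disturb the rational comparison. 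Everything else is a formal consequence of the long exact sequence and the standard fact that for a finitely generated abelian group $A$, the inclusion $A \otimes \Z \to A \otimes \R$ has kernel $A_{\mathrm{tors}}$ and image a lattice isomorphic to $A_{\mathrm{free}}$.
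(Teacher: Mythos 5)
Your proposal follows the paper's own route exactly: extract the four-term piece of the long exact sequence coming from \eqref{SES for Deligne}, identify the connecting morphism with the map induced by the coefficient inclusion $\TZS\hookrightarrow i\R$, and observe that this map kills torsion and has image the free part. The substance is right, but two slips are worth fixing. First, the arrows in your displayed four-term sequence point the wrong way: for the short exact sequence of complexes $0\to \TA^{*\le p-1}[-1]\to \ZDR(p)\to \TZS\to 0$ the connecting homomorphism goes from the cohomology of the quotient to that of the subcomplex, so the sequence reads $H^{q-1}(M,C_2;\TZS)\xrightarrow{\delta} H^{q-1}(M,C_2;i\R)\to \HDR^{q}(M;\Z(p))\to H^{q}(M,C_2;\TZS)\xrightarrow{\delta}\cdots$; your subsequent prose (``the image of $\delta$ in $H^{q-1}(M,C_2;i\R)$'') silently uses this correct direction, and only with it does the cokernel come out as a quotient of $H^{q-1}(M,C_2;i\R)$ as in the statement. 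Second, your claim that Proposition \ref{prop:coho_of_Ak} gives $\cH^{j}(M,C_2;\TA^{*\le p-1})=H^{j}(M,C_2;i\R)$ ``for all $j\le q$'' fails in the boundary case $q=p-1$ allowed by the proposition: there $\cH^{q}(M,C_2;\TA^{*\le p-1})=\Eh^{p-1}/\dd\Eh^{p-2}(M)$, not $H^{p-1}(M,C_2;i\R)$. The conclusion survives because the connecting morphism factors through the injection $H^{p-1}(M,C_2;i\R)=\Eh^{p-1}_{\cl}/\dd\Eh^{p-2}(M)\hookrightarrow \Eh^{p-1}/\dd\Eh^{p-2}(M)$, so its kernel is still the torsion subgroup, but this case needs to be said separately rather than subsumed under $j<p-1$.
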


\begin{remark}
For $q = 2$ and $p = 3$, 
we get the short exact sequence 
\begin{equation*}
0 \to H^{1}(M,C_2;i\R)/H^{1}(M,C_2;\TZS)_{\mathrm{free}} \to \HDR^{2}(M;\Z(3)) \to H^{2}(M,C_2;\TZS)_{\mathrm{tors}} \to 0.
\end{equation*} 
In \cite[\S 3]{NG2} the torsion and free parts of $H^{2}(M,C_2;\TZS)$ are used to classify Real line bundles. 
We are optimistic that the above short exact sequence may help to shed new light on the mixed case discussed in \cite[Remark 3.18]{NG2}.
\end{remark}


Now we look at the case $p = q$. 
The connecting morphism is the inclusion of closed imaginary integral forms into the group $\Eh^{p-1}/\dd\Eh^{p-2}(M)$. 
We write $\Eh^{p-1}_0(M)$ for the subgroup of $\Eh_{\cl}^{p-1}(M)$ generated by \emph{closed forms with integral imaginary period} and coboundaries. 
We then deduce from the long exact sequence and Proposition \ref{prop:coho_of_Ak} the following result:

\begin{theorem}\label{SES in degree q = p}
For every $p$, we have the following exact sequence
\begin{align}\label{eq:SES in degree q = p}
0 \to \Eh^{p-1}/\Eh^{p-1}_{0}(M) \to \HDR^{p}(M;\Z(p)) \to H^{p}(M,C_2;\TZS) \to 0. \qed 
\end{align}
\end{theorem}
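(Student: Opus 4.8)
The plan is to deduce the statement directly from the long exact hypercohomology sequence attached to the short exact sequence \eqref{SES for Deligne} of complexes of $C_2$-sheaves — which is exactly the sequence already displayed above — combined with the computation of $\cH^*(M,C_2;\TA^{*\le p-1})$ in Proposition \ref{prop:coho_of_Ak}. The case $p=0$ is immediate from Example \ref{NatIso when when p = 0}, since then $\ZDR(0)=\TZS$ and $\Eh^{-1}(M)=\Eh^{-1}_0(M)=0$, so from now on assume $p\ge 1$.

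First I would isolate the relevant segment of the long exact sequence, namely
\begin{align*}
H^{p-1}(M,C_2;\TZS)\xto{\ \delta\ }\Eh^{p-1}/\dd\Eh^{p-2}(M)\to\HDR^{p}(M;\Z(p))\to H^{p}(M,C_2;\TZS)\to\cH^{p+1}\bigl(M,C_2;\TA^{*\le p-1}[-1]\bigr).
\end{align*}
Since $\cH^{p+1}(M,C_2;\TA^{*\le p-1}[-1])=\cH^{p}(M,C_2;\TA^{*\le p-1})=0$ by Proposition \ref{prop:coho_of_Ak}, the map $\HDR^{p}(M;\Z(p))\to H^{p}(M,C_2;\TZS)$ is surjective; this is the right-hand map of the asserted sequence. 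By exactness at $\HDR^{p}(M;\Z(p))$ its kernel is the image of $\Eh^{p-1}/\dd\Eh^{p-2}(M)\to\HDR^{p}(M;\Z(p))$, and by exactness at $\Eh^{p-1}/\dd\Eh^{p-2}(M)$ this image is $\coker(\delta)$. Thus it remains only to identify $\coker(\delta)$ with $\Eh^{p-1}(M)/\Eh^{p-1}_0(M)$.

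For this I would unwind the connecting homomorphism $\delta$ by the snake lemma applied to \eqref{SES for Deligne}. Using the inclusion $\iota\colon\TZS\hookrightarrow\TA^0$ and the equivariant de Rham identifications underlying Proposition \ref{prop:coho_of_Ak} (the equivariant de Rham theorem of \cite[Theorem 2.2]{GZ}), one checks, just as for the ordinary Deligne complex in \cite{bryl}, that $\delta$ sends a class in $H^{p-1}(M,C_2;\TZS)$ to a closed invariant imaginary $(p-1)$-form representing the image of that class in equivariant de Rham cohomology — i.e.\ to a closed form with integral imaginary periods — well defined modulo $\dd\Eh^{p-2}(M)$. By the very definition of $\Eh^{p-1}_0(M)$ as the subgroup of $\Eh^{p-1}(M)$ generated by the coboundaries $\dd\Eh^{p-2}(M)$ together with the closed forms with integral imaginary period, this shows $\Imm(\delta)=\Eh^{p-1}_0(M)/\dd\Eh^{p-2}(M)$. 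Since $\dd\Eh^{p-2}(M)\subseteq\Eh^{p-1}_0(M)\subseteq\Eh^{p-1}(M)$, we then get
\begin{align*}
\coker(\delta)=\bigl(\Eh^{p-1}(M)/\dd\Eh^{p-2}(M)\bigr)\big/\bigl(\Eh^{p-1}_0(M)/\dd\Eh^{p-2}(M)\bigr)\cong\Eh^{p-1}(M)/\Eh^{p-1}_0(M),
\end{align*}
and substituting this into the segment above yields the short exact sequence \eqref{eq:SES in degree q = p}.

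The bookkeeping — the long exact sequence, the input from Proposition \ref{prop:coho_of_Ak}, and the last isomorphism — is routine; the one point that deserves genuine care, and the main obstacle, is the explicit description of $\delta$ and hence of its image. Concretely, one must verify both that every equivariant $\TZS$-class maps to a form whose periods lie in the imaginary integral lattice, and conversely that every closed invariant imaginary $(p-1)$-form with integral imaginary periods is hit; this requires tracking a class through the flasque resolutions and quasi-isomorphisms of Proposition \ref{prop:coho_of_Ak} and through the snake-lemma connecting map, checking $C_2$-equivariance at each step. Once that identification is secured, the theorem follows formally.
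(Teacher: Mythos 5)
Your proposal follows the paper's own argument essentially verbatim: the paper also extracts the relevant segment of the long exact hypercohomology sequence attached to \eqref{SES for Deligne}, uses Proposition \ref{prop:coho_of_Ak} to kill the term $\cH^{p}\bigl(M,C_2;\TA^{*\leq p-1}\bigr)$ and to identify the preceding term with $\Eh^{p-1}/\dd\Eh^{p-2}(M)$, and identifies the connecting morphism as the inclusion of closed imaginary forms with integral periods, so that its cokernel is $\Eh^{p-1}(M)/\Eh^{p-1}_{0}(M)$ by the definition of $\Eh^{p-1}_{0}(M)$. Your added care about unwinding $\delta$ through the equivariant de Rham identification is a reasonable elaboration of a step the paper treats as routine (citing \cite{bryl}), but it is not a different route.
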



\section{Classification of Real line bundles with connection}

For $p=1$, the group $\mathcal{E}^{0}_{0}(M)$ equals the group of Real smooth functions on $M$ with values in $i\Z$. 
We denote the latter group by $C^{\infty}_{\Rh}(M,i\Z)$. 
By \cite{NG2} we have $H^1(M,C_2;\TZS) \cong [M,U(1)]_{C_2}$ if $M$ is compact.  
By Lemma \ref{lemma:all_cases_of_p}, we have 
\[
\HDR^{1}(M,\Z(1))\cong H^0(M,C_2;\U(1)). 
\]
The group $H^0(M;C_2;\U(1))$ consists of smooth Real functions from $M$ to $U(1)$, which we denote as $C^{\infty}_{\Rh}(M,U(1))$. 
Hence, for $p=1$ and $M$ compact, we can rewrite sequence \eqref{eq:SES in degree q = p} as
\begin{align*}
0 \to C^{\infty}_{\Rh}(M,i\R)/C^{\infty}_{\Rh}(M,i\Z) \to C^{\infty}_{\Rh}(M,U(1)) \to  [M,U(1)]_{C_2} \to 0. 
\end{align*}

Now we assume $p=2$. 
By \cite{krasnov}, the group $H^{2}(M,C_2;\TZS)$ classifies isomorphism classes of Real line bundles. 
By Theorem \ref{HD2 classifies real line bundles with connection}, elements in $\HDR^{2}(M;\Z(2))$ correspond bijectively to isomorphism classes of Real line bundles with Real connection over $M$. 
The following result provides an interpretation of the group $\mathcal{E}^1/\mathcal{E}^1_0(M)$, where we consider $M\times U(1)$ as a Real space with the induced $C_2$-action on the product.

\begin{prop}
There is a bijection between the group $\mathcal{E}^1/\mathcal{E}^1_0(M)$ and the set of isomorphism classes of connections on the trivial bundle $M\times U(1)$.
\end{prop}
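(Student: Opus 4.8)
The plan is to construct an explicit map from $\mathcal{E}^1(M)$ to the set of Real connections on $M\times U(1)$, show it is surjective with kernel exactly $\mathcal{E}^1_0(M)$, and then identify isomorphism classes. First I would recall that a connection on the trivial bundle $M\times U(1)$ is determined by a single global $1$-form: writing the connection as $\nabla = \d + A$ relative to the canonical section $s_0 \equiv 1$, the datum $A$ is a smooth $1$-form, and by \cite[2.2.16]{bryl} the Hermitian (unitary) condition forces $A$ to be imaginary-valued, i.e. $A \in \TA^1(M)$. The Real condition $\nabla\circ\sigma_0 = \sigma_1\circ\nabla$, applied to the Real section $s_0$ (which is indeed Real since conjugation fixes $1 \in U(1)$), unwinds to $\overline{\tau^*A} = A$, that is, $A \in \mathcal{E}^1(M)$. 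Conversely any $A \in \mathcal{E}^1(M)$ defines a Real connection $\d + A$. This gives a bijection between $\mathcal{E}^1(M)$ and the set of \emph{all} Real connections on $M\times U(1)$, before passing to isomorphism classes.

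Next I would analyze when two such connections $\d + A$ and $\d + A'$ are isomorphic as Real connections on $M\times U(1)$. An isomorphism of the trivial bundle with itself is given by a smooth map $g \colon M \to U(1)$, and it intertwines the two connections iff $A' = A + \d\log g = A - g^{-1}\d g$ (sign conventions aside). For the isomorphism to respect the Real structures, $g$ must be a Real map, i.e. $g \in C^\infty_{\Rh}(M,U(1)) = H^0(M,C_2;\U(1))$. Therefore isomorphism classes of Real connections on $M\times U(1)$ are in bijection with $\mathcal{E}^1(M)$ modulo the subgroup $\{\, \d\log g : g \in C^\infty_{\Rh}(M,U(1)) \,\}$ of exact logarithmic differentials of Real $U(1)$-valued functions. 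It remains to identify this subgroup with $\mathcal{E}^1_0(M)$.

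The key identification is that $\{\d\log g : g \in C^\infty_{\Rh}(M,U(1))\}$ equals $\mathcal{E}^1_0(M)$, the subgroup of closed imaginary $1$-forms generated by forms with integral imaginary periods together with exact forms $\d h$, $h \in \mathcal{E}^0(M)$. One inclusion is classical: $\d\log g$ is closed and imaginary (since $g$ is $U(1)$-valued and Real), and its periods lie in $2\pi i\Z$ because $g$ wraps an integer number of times around $U(1)$ on each loop; locally $\d\log g = \d h$ for an imaginary-valued $h$, and Real-ness of $g$ can be arranged to make the local primitives compatible with $\tau$. For the reverse inclusion, given a closed $\omega \in \mathcal{E}^1(M)$ with all periods in $2\pi i \Z$, one integrates to produce $g(x) = \exp\!\big(\int_{x_0}^x \omega\big)$, well-defined in $U(1)$ precisely because of the period condition; the fact that $\omega$ is $\tau$-invariant (in the sense $\overline{\tau^* \omega} = \omega$) makes $g$ a Real map after a suitable choice of basepoint behaviour, and adding an arbitrary $\d h$ with $h \in \mathcal{E}^0(M)$ corresponds to multiplying $g$ by $\exp(h)$, still Real. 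Combining the two inclusions gives the claimed equality of subgroups, and hence the bijection $\mathcal{E}^1/\mathcal{E}^1_0(M) \xrightarrow{\ \sim\ } \{\text{isom.\ classes of connections on } M\times U(1)\}$.

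The main obstacle I anticipate is the equivariance bookkeeping in the reverse inclusion: producing a \emph{Real} function $g$ from a $\tau$-invariant closed form with integral periods requires care about the choice of basepoint and path, since $\tau$ may not fix $x_0$ and may permute homotopy classes of paths. The clean way around this is to work on the quotient $M/C_2$ using $f_*\TA^1 = (\TA^1)^{C_2}$ and the identification $\mathcal{E}^1(M) = (\TA^1)^{C_2}(M)$ from Section~5, reducing the statement to the non-equivariant one on $M/C_2$ — or, more directly, to read off the result from the long exact sequence computation already established: Theorem~\ref{SES in degree q = p} with $p=2$ and Theorem~\ref{HD2 classifies real line bundles with connection} together force $\mathcal{E}^1/\mathcal{E}^1_0(M)$ to be the kernel of the forgetful map from Real line bundles with Real connection to Real line bundles, which is exactly the set of Real connections on the \emph{trivial} bundle up to isomorphism, i.e. connections on $M\times U(1)$. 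This second route makes the proposition essentially a formal consequence of what precedes it, with the explicit description above serving to make the bijection concrete.
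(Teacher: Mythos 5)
Your closing ``more direct'' route is the paper's actual proof: the paper takes the short exact sequence of complexes $0\to(\TZS\to 0)\to(\TA^0\to\TA^1)\to(\U(1)\to\TA^1)\to 0$, passes to the long exact sequence in equivariant hypercohomology, identifies $\coker(\iota^1)\cong\Eh^1/\Eh^1_0(M)$ via Proposition \ref{prop:coho_of_Ak}, and identifies $\ker(\alpha^1)$ with Real connections on the trivial bundle because $\alpha^1$ is the forgetful map under the identifications of Lemma \ref{lemma:all_cases_of_p} and Theorem \ref{HD2 classifies real line bundles with connection}. So that part of your proposal is essentially the paper's argument, and it is the safer one: the only input beyond formal exactness is knowing what the connecting map does, which the paper reads off from the explicit quasi-isomorphism. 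Your primary route --- parametrising Real connections on $M\times U(1)$ by global forms $A\in\Eh^1(M)$ and quotienting by gauge transformations $\d\log g$ with $g$ Real --- is a genuinely different, more concrete argument, and it buys an explicit description of the bijection that the homological proof hides.

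However, as written the explicit route has two gaps in the identification $\{\d\log g : g\in C^\infty_{\Rh}(M,U(1))\}=\Eh^1_0(M)$. First, a normalization issue: $\d\log g$ for $U(1)$-valued $g$ has periods in $2\pi i\Z$, whereas $\Eh^1_0(M)$ is defined via periods in $i\Z$; the paper absorbs this factor into the quasi-isomorphism $\exp(2\pi-)$, $\cdot\,2\pi$ of Lemma \ref{lemma:all_cases_of_p}, and your map must carry the same rescaling or the subgroups simply do not match. Second, the reverse inclusion is not just ``bookkeeping'': setting $g(x)=\exp\bigl(\int_{x_0}^x\omega\bigr)$ for a closed $\tau$-invariant $\omega$ with integral periods yields $\overline{g(\tau(x))}=c\,g(x)$ with $c=\exp\bigl(\int_{\tau(x_0)}^{x_0}\omega\bigr)\in U(1)$ a constant that need not be $1$; one must correct $g$ by a constant $\lambda$ with $\lambda^2=c$ (and do this consistently on each $\tau$-orbit of connected components) to obtain a genuinely Real $g$. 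This can be carried out, but it is precisely the step you defer, and without it the explicit proof is incomplete. Since you supply the long-exact-sequence argument as a fallback, the proposition is still established, but you should either complete the constant-adjustment argument or lean on the homological route as the actual proof and keep the explicit description as a remark.
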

\begin{proof}
We consider the following short exact sequence of complexes of $C_2$-sheaves:
\begin{align}\label{eq:proof_map_of_complexes}
    \xymatrix{
    0 \ar[r] \ar[d] & \TZS \ar[r] \ar[d]^-{\iota} & 0 \ar[r]\ar[d] & 0 \ar[d]\\
    0 \ar[r]\ar[d] & \TA^0\ar[r]^{\dd}\ar[d]^-{\exp(2\pi-)} & \TA^1\ar[r]\ar[d]^-{\cdot 2\pi} & 0\ar[d]\\
    0 \ar[r] & \U(1) \ar[r]^{\dd\log} & \TA^1\ar[r] & 0.
    }
\end{align}
Taking equivariant hypercohomology induces a long exact sequence. 
Using Proposition \ref{prop:coho_of_Ak} for the second row we get the exact sequence 
\begin{align*}
H^1(M,C_2;\TZS) \xto{\iota^1} \Eh^1/\dd\Eh^0(M) \to H^1\left(M,C_2;\U(1))\to\TA^1\right) \xto{\alpha^1} H^2(M,C_2;\TZS) \to 0
\end{align*}
where $\alpha^*$ denotes the connecting homomorphism. 
Exactness implies that there is an isomorphism 
\begin{equation}\label{coker iota = ker alpha}
\coker(\iota^1) \cong \ker(\alpha^1)
\end{equation}
where $\iota$ denotes the upper morphism of complexes in \eqref{eq:proof_map_of_complexes}. 
We already know that there is an isomorphism 
\[
\coker(\iota^1) \cong \mathcal{E}^1/\mathcal{E}^1_0(M).
\]
By Lemma \ref{lemma:all_cases_of_p} and Theorem \ref{HD2 classifies real line bundles with connection} 
we can identify elements in $H^1(M,C_2;\U(1)) \to \TA^1)$ with isomorphism classes of Real line bundles with Real connection, 
and elements in the group $H^2(M,C_2;\TZS)$ with isomorphism classes of Real line bundles. 
Using these identifications, we see that the map $\alpha^1$ sends a Real line bundle with Real connection to the underlying Real line bundle. 
Hence the kernel of $\alpha^1$ corresponds to the isomorphism classes of Real connections on the trivial bundle $M\times U(1)$. 
Thus isomorphism \eqref{coker iota = ker alpha} can be rewritten as
\begin{equation*}
    \mathcal{E}^1/\mathcal{E}^1_0(M) =
    \left\{\begin{tabular}{l}
isomorphism classes of Real \\
connections on $M\times U(1)$
\end{tabular}\right\}. \qedhere
\end{equation*}
\end{proof}

Summarising this discussion we have proven the following result which is an analog of \cite[page 162]{gajer} for Real smooth Deligne cohomology: 

\begin{theorem}
\label{thm:iso_of_ses_p=2}
Let $(M,\tau)$ be a $C_2$-manifold. 
There is an isomorphism of short exact sequences between  
\begin{align*}
\xymatrix{
0\ar[r] & \Eh^{1}/\Eh^{1}_{0}(M) \ar[r] & \HDR^{2}(M;\Z(2))  \ar[r] & H^{2}(M,C_2;\TZS) \ar[r] & 0 
}
\end{align*} 
and 
\begin{align*}
0 \to \left\{\begin{tabular}{l}
isom.\,classes of \\ 
Real connections \\ 
on the bundle \\ $M\times U(1)$
 \end{tabular}\right\} \to \left\{\begin{tabular}{l}
isom.\,classes of \\
Real line bundles\\ 
with Real \\
connection over $M$
\end{tabular}\right\}\to \left\{\begin{tabular}{l}
isom.\,classes \\ 
of Real\\ 
line bundles \\ 
over $M$
\end{tabular}\right\} \to 0.
\end{align*}
\end{theorem}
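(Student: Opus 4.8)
The plan is to assemble the isomorphism of short exact sequences from the three identifications already established in the preceding sections, then check that the two comparison maps are compatible with the structure maps of the two sequences. The top row is the sequence \eqref{eq:SES in degree q = p} for $p=2$, coming from the short exact sequence \eqref{SES for Deligne} of $C_2$-sheaves together with Proposition \ref{prop:coho_of_Ak}. The bottom row is the evident short exact sequence of sets (in fact abelian groups, since tensoring bundles and adding connections are the group operations) relating isomorphism classes of Real line bundles with Real connection, their underlying Real line bundles, and the connections on the trivial bundle $M\times U(1)$. For the middle vertical arrow I would invoke Theorem \ref{HD2 classifies real line bundles with connection}, which gives a group isomorphism $\HDR^2(M;\Z(2)) \cong \{\text{isom.\,classes of Real line bundles with Real connection}\}$. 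For the right vertical arrow I would use Example \ref{cohomOfTZ} together with \cite{krasnov} (or \cite{kahn}), identifying $H^2(M,C_2;\TZS)$ with isomorphism classes of Real line bundles. For the left vertical arrow I would use the Proposition immediately preceding this theorem, which identifies $\Eh^1/\Eh^1_0(M)$ with isomorphism classes of Real connections on $M\times U(1)$.

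Next I would verify commutativity of the two squares. For the right square, I would trace the surjection $\HDR^2(M;\Z(2)) \to H^2(M,C_2;\TZS)$ through the \v{C}ech–hypercohomology description from Lemma \ref{lemma:all_cases_of_p}: a class is represented by a pair $(g,\omega)$ with $g\in\check C^1(\V,\U(1))$ a cocycle and $\omega\in\check C^0(\V,\TA^1)$ satisfying $\omega_i-\omega_j = \d\log g_{ij}$, and the surjection simply forgets $\omega$ and sends $g$ to the class of the underlying line bundle. Under the bundle-with-connection identification this is exactly ``forget the connection,'' which is the bottom row's surjection; so the right square commutes essentially by construction, as was already observed in the proof of the preceding Proposition where the analogous map $\alpha^1$ was analysed. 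For the left square, I would check that the inclusion $\Eh^1/\Eh^1_0(M)\hookrightarrow\HDR^2(M;\Z(2))$ coming from \eqref{SES for Deligne} corresponds, under the same \v{C}ech description, to sending an imaginary $1$-form $\omega$ to the pair $(1,\omega)$, i.e.\ to the trivial cocycle equipped with the global connection form $\omega$; this is precisely the connection $\d + \omega$ on $M\times U(1)$, matching the left vertical identification and the bottom row's injection.

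The main obstacle, and the step deserving the most care, is bookkeeping the two different presentations of the left-hand group and checking they agree: $\Eh^1/\Eh^1_0(M)$ arises here as a quotient via Proposition \ref{prop:coho_of_Ak} (the $(p-1)$-st cohomology of $\TA^{*\le p-1}$), while in the preceding Proposition it was obtained as $\coker(\iota^1)$ from a different short exact sequence of complexes \eqref{eq:proof_map_of_complexes}. I would need to confirm that the two resulting maps into $\HDR^2(M;\Z(2))$ have the same image and induce the same identification with connections on $M\times U(1)$ — this amounts to comparing the connecting maps of \eqref{SES for Deligne} and of \eqref{eq:proof_map_of_complexes}, which fit into a morphism of short exact sequences of complexes, so a small diagram chase settles it. Everything else — exactness of the bottom row, additivity of all the maps, and independence of choices of $C_2$-cover — has already been recorded in the cited results, so the proof reduces to ``combine Theorem \ref{HD2 classifies real line bundles with connection}, the preceding Proposition, Theorem \ref{SES in degree q = p}, and \cite{krasnov}, and check the two squares commute.''
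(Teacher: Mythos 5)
Your proposal matches the paper's own argument: the theorem is proved there simply by ``summarising this discussion,'' i.e.\ by combining Theorem \ref{HD2 classifies real line bundles with connection}, the preceding Proposition on $\Eh^1/\Eh^1_0(M)$, Theorem \ref{SES in degree q = p}, and the identification of $H^2(M,C_2;\TZS)$ with Real line bundles via \cite{krasnov}. Your additional explicit verification of the two commuting squares (and the comparison of the two presentations of the left-hand group) is left implicit in the paper, so your write-up is, if anything, slightly more careful than the original.
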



We end this paper with the following remarks which motivate our choice of a definition of Real Deligne cohomology groups: 

\begin{remark}
As for smooth Deligne cohomology the groups $\HDR^{p}(M;\Z(q))$ are most interesting in the cases $p = q$. 
By Example \ref{NatIso when when p = 0} we have 
\[
\HDR^{0}(M;\Z(0)) \cong H^{0}(M,C_2;\TZS),
\]
and by setting $p = 1$ in Lemma \ref{lemma:all_cases_of_p}, we get 
\[
\HDR^{1}(M;\Z(1)) \cong \cH^0(M,C_2;\U(1)) \cong H^0(M;C_2;\U(1)),
\]
where the second isomorphism follows from Proposition \ref{cechIsGrothendieck}. 
Hence, together with our main result on $\HDR^{2}(M;\Z(2))$ in Theorem \ref{thm:iso_of_ses_p=2}, we obtain identifications of the groups $\HDR^{p}(M;\Z(qp)$ for $p=0,1,2$ analogous to the identifications of Deligne cohomology groups for $p=q$ in \cite[\S 1.4]{EV}. 
We therefore believe that the groups $\HDR^{p}(M;\Z(q))$ of Definition \ref{Def Real Deligne cohomology} deserve to be called Real Deligne cohomology groups. 
\end{remark}

\begin{remark}
We have not explored further geometric interpretations of the groups $\HDR^{p}(M;\Z(q))$, yet.  
We note, however, that for $q=p=3$ 
we obtain the short exact sequence 
\begin{equation*}
    0 \to 
    \Eh^{2}/\Eh^{2}_{0}(M) \to 
    \HDR^{3}(M;\Z(3)) \to H^{3}(M,C_2;\TZS) 
    \to 0.
\end{equation*}
By \cite{hekmati}, the group $H^{3}(M,C_2;\TZS)$ classifies stable isomorphism classes of Real bundle gerbes. 
We therefore expect that $\HDR^{3}(M;\Z(3))$ classifies stable isomorphism classes of Real bundle gerbes with Real connective structure and curving similar to \cite[Chapter V]{bryl}. 
\end{remark}


\bibliographystyle{amsalpha}

\end{document}